\newtheorem{lemma}{Lemma}[section]
\newtheorem{remark}[lemma]{Remark}
\newtheorem{proposition}[lemma]{Proposition}
\newtheorem{theorem}[lemma]{Theorem}
\newtheorem{corollary}[lemma]{Corollary}
\theoremstyle{remark}
\newtheorem{example}[lemma]{Example}
\DeclareMathOperator*{\argmin}{argmin}
\DeclareMathOperator{\sign}{sign}
\newcommand{\real}{\mathbb{R}}
\newcommand{\dualp}[1]{\left\langle #1 \right\rangle} 
\newcommand{\E}{\mathbb{E}}
\begin{document}

\title{A Relaxation Argument for Optimization in Neural Networks and Non-Convex Compressed Sensing}
\author{G. Welper\footnote{Department of Mathematics, University of Central Florida, Orlando, FL 32816, USA, email \href{mailto:gerrit.welper@ucf.edu}{\texttt{gerrit.welper@ucf.edu}}. 
}}

\date{}
\maketitle

\begin{abstract}
  It has been observed in practical applications and in theoretical analysis that over-parametrization helps to find good minima in neural network training. Similarly, in this article we study widening and deepening neural networks by a relaxation argument so that the enlarged networks are rich enough to run $r$ copies of parts of the original network in parallel, without necessarily achieving zero training error as in over-parametrized scenarios. The partial copies can be combined in $r^\theta$ possible ways for layer width $\theta$. Therefore, the enlarged networks can potentially achieve the best training error of $r^\theta$ random initializations, but it is not immediately clear if this can be realized via gradient descent or similar training methods.

  The same construction can be applied to other optimization problems by introducing a similar layered structure. We apply this idea to non-convex compressed sensing, where we show that in some scenarios we can realize the $r^\theta$ times increased chance to obtain a global optimum by solving a convex optimization problem of dimension $r\theta$.
\end{abstract}

\smallskip
\noindent \textbf{Keywords:} compressed sensing, deep neural networks, relaxation, non-convex, global minima

\smallskip
\noindent \textbf{AMS subject classifications:} 90C26, 94A12, 68T05

\section{Introduction}

Neural networks are trained with gradient descent or related methods starting from random initial values. Since the loss function is non-convex, this can in principle result in bad local minima. Indeed, in the worst case, the problem of neural network training is $NP$-hard \cite{BlumRivest1989} and this behavior can be expected. Nonetheless, neural networks are successfully trained in a large number of practical applications \cite{GoodfellowBengioCourville2016,HeZhangRenEtAl2016}. Contrary to arbitrary networks in worst case scenarios, practical networks are usually over-parametrized, which has been studied experimentally in e.g. \cite{GoodfellowVinyals2015,ZhangBengioHardtEtAl2017}. On the theoretical side, over-parametrization usually means that the networks are powerful enough to achieve zero training error in which case convergence guarantees of gradient descent methods are available \cite{SoudryCarmon2016,SafranShamir2018,LiLiang2018,Allen-ZhuLiSong2019,DuLeeLiEtAl2019}.

In this article, we study the benefits of adding extra weights to neural networks and other non-convex optimization problems like $\ell_p$-minimization with $p<1$ in compressed sensing, without necessarily being over-parametrized. To this end, we start with a non-convex reference problem and enlarge it by a relaxation argument that is motivated by the process of widening and deepening a neural network. The enlarged network does not necessarily achieve zero training error, but has sufficient capacity to run several (say $r$), instances of parts of the reference problem in parallel together with some extra \emph{selector variables}. By combining the parallel pieces via a proper choice of the selector variables, the enlarged network can be reduced to the reference network in $r^\theta$ different ways, where $\theta$ is the width of original network.

Therefore, if we can compute the optimal selector variables, we can potentially find a minimizer that is comparable to the best of $r^\theta$ numerical optima from random initializations of the reference problem. However, the enlarged network only has the capacity to run $r$ of those combinations in parallel. On the one hand, that allows us to run the network much faster than the exponential number $r^\theta$ of combinations, but on the other hand it is not clear to what extend a gradient descent method can realize the described gains. The relaxation argument can be applied to several non-convex optimization problems and therefore, in this paper we analyze the method for compressed sensing, which is better understood than neural network training.

In compressed sensing, we search for the sparsest solution of an under-determined linear system, i.e. for a measurement matrix $A \in \real^{m \times N}$ and measurements $y \in \real^m$, we are interested in the solution of the optimization problem
\begin{equation}
  \begin{aligned}
     & \min_{x \in \real^N} \|x\|_0, & & \text{s.t.} & Ax & = y,
  \end{aligned}
  \label{eq:cs0}
\end{equation}
where the $\ell_0$-norm measures the number of non-zero entries. Since this problem is computationally difficult, it is typically replaced by a $\ell_p$-minimization
\begin{equation}
  \begin{aligned}
    & \min_{x \in \real^N} \|x\|_p^p, & & \text{s.t.} & Ax & = y,
  \end{aligned}
  \label{eq:cs}
\end{equation}
with $0 < p \le 1$. The most common choice is $p=1$, for which the optimization problem is convex and the restricted isometry property (RIP) or similar conditions on the matrix $A$ guarantee that the solutions of the problems \eqref{eq:cs0} and \eqref{eq:cs} coincide, see e.g. \cite{CandesRombergTao2006a,Donoho2006,CandesRombergTao2006,FoucartRauhut2013}. Nonetheless, finding sparse solutions is also of interest in many applications where the RIP is not available. For $p<1$, the $\ell_p$ norm resembles the $\ell_0$ norm more closely and one may expect better sparse recovery results with less assumptions on the matrix $A$. Such results have been reported by several authors \cite{CandesWakinBoyd2008,ChartrandStaneva2008,FoucartLai2009,Sun2012,ShenLi2012}.

For $p$ strictly smaller than one, the optimization problem \eqref{eq:cs} is no longer convex making its optimization considerably more difficult. In fact, in the worst case the problem is NP-hard \cite{Natarajan1995,GeJiangYe2011}. Nonetheless, there are several iterative algorithms \cite{CandesWakinBoyd2008,ChartrandWotaoYin2008,FoucartLai2009,DaubechiesDeVoreFornasierEtAl2010,LaiXuYin2013,WoodworthChartrand2016}, typically variations of reweighted least squares methods, that show promising performance on these problems. Due to the non-convex nature of the problem, the corresponding analysis requires additional assumptions that are hard to validate practically to provide convergence guarantees. 

To this end, we apply the relaxation strategy from the neural network motivation above to the non-convex compressed sensing problem \eqref{eq:cs} to obtain an enlarged problem. This can in principle be solved by reweighted least squares methods, but we do not pursue this in this paper. Instead, to obtain some first insight into the potential of the relaxation argument, as opposed to providing a practical method, we only consider the simpler optimization of the selector variables only. The remaining variables of the original compressed sensing problem are restricted to a discrete set, which is, however, still rich enough to render the problem $NP$-hard, in the worst case. This allows us to analyze a simplified method of randomly guessing the solution in the $r$ copies of the enlarged problem and then finding a good combination by optimizing the selector variables. We show that this achieves the $r^\theta$ fold increased chance to find the global optimum as described in the motivation above, with much weaker assumptions on the sensing matrix $A$ than the RIP. Contrary to running $r^\theta$ separate trials on the reference problem to achieve a similar gain, the enlarged problems requires us to solve a $r \theta$ dimensional convex optimization problem.

The paper is organized as follows. In Section \ref{sec:relaxation}, we state the generic relaxation method, its application to both compressed sensing and neural network training and provide some estimates of potential success probabilities. In Section \ref{sec:cs-relaxation}, we consider the relaxation method for compressed sensing more carefully and prove the main results of the paper.

\section{A Relaxation Method}
\label{sec:relaxation}

In Section \ref{sec:simple-relaxation}, we describe a simple relaxation method and in Section \ref{sec:block-relaxation} a variant with added structure. A discussion of the optimization problems and success probabilities is given in Sections \ref{sec:relaxation-notes} and \ref{sec:relaxation-probabilities}, respectively.

\subsection{Simple Relaxation}
\label{sec:simple-relaxation}

We consider the optimization problem
\begin{equation}
  \begin{aligned}
    & \min_{x \in \real^N} g(x), & 
    & \text{s.t.} &
    h(x) & = 0,
  \end{aligned}
  \label{eq:opt-problem}
\end{equation}
with objective $g$ and constraint $h$ and solve it with a local search method, e.g. gradient descent or variants thereof for neural networks or reweighted least squares for compressed sensing. Since we are particularly interested in non-convex problems, depending on the initial value, this may or may not result in a satisfactory minimizer. Probably the simplest idea to enhance our chance of success is to repeat this optimization for multiple initial values, say $x_k$, $k=1, \dots, r$ resulting in local (numerical) optima $\bar{x}_r$ from which we select the best one
\begin{equation}
  x = \argmin_{k=1, \dots, r} g(\bar{x}_k).
  \label{eq:multiple-initial}
\end{equation}
For simplicity, we drop the equality constraint during this motivation, but all arguments work with it unchanged. In order to relax this problem to a continuous one, note that with standard unit basis vectors $e_k \in \real^r$ and splitting $g(x) = \ell(f(x))$, we can equivalently minimize
\begin{equation*}
  \min_{z \in e_1, \dots, e_r} \ell \left[ \sum_{k=1}^r z_k f(\bar{x}_k) \right].
\end{equation*}
The vector $z$ serves as a ``\emph{selector}'' and picks one guess $f(\bar{x}_k)$ and the split of the objective $g$ into the two components $\ell$ and $f$ allows some flexibility in the placement of the selector. In hope to simplify the problem, we remove the discrete constraint $z \in e_1, \dots, e_r$ in favor of a continuous one $z \in \real^r$ and obtain the \emph{relaxed} problem
\begin{equation}
  \min_{z \in \real^r} \ell \left[ \sum_{k=1}^r z_k f(\bar{x}_k) \right].
  \label{eq:simple_relaxation}
\end{equation}
Similar relaxation strategies are common for many optimization problems, see e.g. \cite{BoydVandenberghe2004,Nesterov2018} in general, \cite{ConfortiCornuejolsZambelli2014} for integer programming or \cite{Villani2003} for optimal transport.  Since the relaxed problem allows a larger choice of selectors $z$, its minimum is at least as small as the un-relaxed one
\begin{equation*}
  \min_{z \in \real^r} \ell \left[ \sum_{k=1}^r z_k f(\bar{x}_k) \right]
  \le
  \min_{k=1, \dots, r} \ell[f(\bar{x}_k)].
\end{equation*}
As a last step, we reintroduce the optimization of the $x$ variable and obtain
\begin{equation}
  \min_{\substack{z \in \real^r \\ x \in \real^N}} \ell \left[ \sum_{k=1}^r z_k f(x_k) \right].
  \label{eq:simple-relaxation}
\end{equation}
A discussion of this problem is given in Section \ref{sec:relaxation-notes}, but before we consider a variant with some additional structure.

\subsection{Block Relaxation}
\label{sec:block-relaxation}

Both the compressed sensing and neural network applications admit extra structure, which we may exploit to write down alternative relaxations of the initial values. Specifically, we may split $f$ and $x$ into blocks $f(x) = [f^1(x^1), \dots, f^\theta(x^\theta)]$ and optimize
\begin{equation}
  \min_{x^1, \dots, x^\theta} \ell \left[ f^1(x^1), \dots, f^\theta(x^\theta) \right],
  \label{eq:opt-structure}
\end{equation}
with each block $f^l(x^l)$ only depending on $x^l$ and not any other $x^j$ with $j \ne l$. The following two examples describe both applications in more detail.

\begin{example}
\label{example:lp}

For $0 < p \le 1$, a matrix $A \in \real^{m \times N}$ and vector $y \in \real^m$ consider the $\ell_p$-minimization
\begin{equation*}
  \begin{aligned}
    & \min_{x \in \real^N} \|x\|_p^p, &
    & \text{s.t.}
    & Ax & = y.
  \end{aligned}
\end{equation*}
If we split $x$ into blocks $x = [x^1, \dots x^\theta]$ with $x^l \in \real^n$ and $n\theta = N$ and define
\begin{align*}
  f^l(x^l) & = \|x^l\|_p^p, &
  \ell(f^1, \dots, f^\theta) = \sum_{l=1}^\theta f^l
\end{align*}
this problem fits into the general structure \eqref{eq:opt-structure}. It has an constraint $h(x) = Ax - y = 0$, which does not influence the discussed relaxation and will be considered more carefully in Section \ref{sec:cs-block-relaxation} below. 

\end{example}

\begin{example}
\label{example:nn}

For neural network training to comply with the structure \eqref{eq:opt-structure} that $f^l$ depends only on one block $x^l$, we consider the relaxation of one layer only and define $x$ as the corresponding weights ($x$ are not the network inputs to comply with the compressed sensing notation). The other layers can be relaxed analogously or optimized alongside of $x$ but are neglected for simplicity in the following motivation.

We split the neural network as $f_{out}(\phi(x f_{in}))$, where $f_{in} \in \real^n$ is the output of previous layers, $x \in \real^{\theta \times n}$ are the weights of the layer we consider, $\phi$ is the element-wise activation function and $f_{out}(\cdot)$ are all downstream layers. In order to fit into the framework \eqref{eq:opt-structure}, we can naturally define the blocks $x^l \in \real^n$ as the rows of $x$ and minimize, a loss function with labels $y$ by
\[
  \operatorname{loss}[y, f_{out}(\phi(x^1 \cdot f_{in}), \dots, \phi(x^\theta \cdot f_{in}))].
\]
This problem fits precisely into the model problem \eqref{eq:opt-structure} with
\begin{align}
  f^l(x^l) & = \phi(x^l \cdot f_{in}), &
  \ell(f^1, \dots f^\theta) & = \operatorname{loss}[y, f_{out}(f^1, \dots, f^\theta)].
  \label{eq:nn-example}
\end{align}
\end{example}

Back to the general problem \eqref{eq:opt-structure}, in order to exploit the extra block structure, we use the same relaxation argument as before.  We start with initial blocks $x^l_k$, $k=1, \dots, r$ and select the best of the resulting numerical minima $[\bar{x}^1_k, \dots, \bar{x}^\theta_k]$ by 
\begin{equation*}
  \min_{k} \ell \left[ f^1(\bar{x}^1_k), \dots, f^\theta(\bar{x}^\theta_k) \right].
\end{equation*}
However, due to the block structure, we can also explore a much bigger search space 
\begin{equation}
  \min_{k^1, \dots k^\theta} \ell \left[ f^1(x^1_{k^1}), \dots, f^\theta(x^\theta_{k^\theta}) \right],
  \label{eq:opt-block}
\end{equation}
which allows us to combine different initial values for every block $f^l(x^l)$ and therefore has a much high chance to include a good initial value. This comes at least with two problems. The first is that we can no longer use the local minimizers $\bar{x}^l_k$. The reason is that the loss $\ell$ couples all blocks so that each optimization process $x^l_k \to \bar{x}^l_k$ not only depends on the initial value $x^l_k$, but also on all other initial values $x^j_k$ with $j \ne l$. Since we now search through all combinations, it is no longer clear how to define $\bar{x}^l_k$. Therefore, we skip this initial optimization for now, but reintroduce it later for the relaxed variant. The second problem is that practically the selection of the optimal $k^1, \dots, k^\theta$ can be very costly, because there are $r^\theta$ possible combinations.

Anyways, let us apply the relaxation argument. We first rewrite the selection as a linear combination
\begin{equation}
  \min_{\substack{z^l \in \{e_1, \dots, e_r)\} \\ l=1, \dots, \theta}} \ell \left[ \sum_{k=1}^r z^1_k f^1(x^1_k), \dots, \sum_{k=1}^r z^\theta_k f^\theta(x^\theta_k) \right],
  \label{eq:block-unrelaxed}
\end{equation}
and then relax it to continuous selectors $z^l$
\begin{equation}
  \min_{z^l \in \real^r, \, l=1, \dots, \theta} \ell \left[ \sum_{k=1}^r z^1_k f^1(x^1_k), \dots, \sum_{k=1}^r z^\theta_k f^\theta(x^\theta_k) \right].
  \label{eq:block-selector}
\end{equation}
Unlike the block-wise selection \eqref{eq:opt-block} of guesses, the relaxed variant \eqref{eq:block-selector} also allows us to reintroduce the optimization of the initial guesses $x^l_k \to \bar{x}^l_k$ by including them in the optimization
\begin{equation}
  \min_{\substack{z^l \in \real^r, \, l=1, \dots, \theta \\ x^l_k, \, k=1, \dots, r, \, l = 1, \dots, \theta}} \ell \left[ \sum_{k=1}^r z^1_k f^1(x^1_k), \dots, \sum_{k=1}^r z^\theta_k f^\theta(x^\theta_k) \right].
  \label{eq:block-relaxation}
\end{equation}
We may also consider other variables in the optimization such as weights from layers that have been neglected in Example \ref{example:nn}.

\subsection{Notes on the optimization problems}
\label{sec:relaxation-notes}

Both relaxed problems \eqref{eq:simple-relaxation} and \eqref{eq:block-relaxation} can be written in the form
\[
  \min_{x_1, \dots, x_r, z} G(x_1, \dots, x_r, z)
\]
with different choices of $G$ and dimensions of $z$. First note that we can choose special values $z_j$ of the selector $z$ so that $G(x_1, \dots, x_r, z_l) = g(x_l)$. Therefore we directly have
\begin{align*}
  \min_{x_1, \dots, x_r, z} G(x_1, \dots, x_r, z) & \le g(\bar{x}_j), &
  j & = 1, \dots, r,
\end{align*}
where as before $\bar{x}_j$ are numerical local optimizers of $g$ with initial values $x_j$. Of course to obtain a fair comparison, we also need a numerical solution of the left hand side. Let $\tilde{x}_i$ and $\tilde{z}$ be such numerical optimizers with the same initial values $x_i$. What can be said about 
\begin{align}
  G(\tilde{x}_1, \dots, \tilde{x}_r, \tilde{z}) & \stackrel{?}{\lesseqqgtr} g(\bar{x}_j), &
  j & = 1, \dots, r?
  \label{eq:main-question}
\end{align}
Let us first make some simple observations:

\begin{enumerate}

  \item In general $\bar{x}_i \ne \tilde{x}_j$ for all $i,j$. The implications depend on the problem at hand. E.g. for neural networks this is fine if the relaxed problem provides better optima and maintains good generalization errors.

  \item The relaxed problem computes all $f(x_j)$, $j=1, \dots, r$ in parallel, but never computes the full outputs $\ell(f(x_j))$ for all $j$. Instead it computes $\ell(\cdot)$ of one mixture of the available $f(x_j)$. Therefore, depending on $\ell$, it is not clear to what extend a gradient descent method can steer the selector variable $z$ to a good choice or balance of the available $f(x_j)$.

  \item Expanding on the last observation, the block relaxation \eqref{eq:block-relaxation} never computes $f([x^1_{k_1}, \cdots, x^\theta_{k_\theta}]) = [f^1(x^1_{k_1}), \dots, f^\theta(x^\theta_{k_\theta})]$ for all $r^\theta$ combinations $k_1, \dots, k_\theta$. This is essential for the runtime since $r^\theta$ quickly becomes prohibitively large, but it raises further questions if gradient descent or similar methods can find the right combination or a good balance.
\end{enumerate}

In summary, the relaxation can significantly reduce the optimization time by avoiding to test an exponential number $r^\theta$ of combinations, but we have to answer the question when it can possibly succeed in finding superior optima.

Some hope comes from our original motivation from deep learning, where it has been observed that larger networks often perform better than smaller ones, see e.g. \cite{GoodfellowVinyals2015,ZhangBengioHardtEtAl2017}. Also several analytical results \cite{SoudryCarmon2016,SafranShamir2018,LiLiang2018,Allen-ZhuLiSong2019,DuLeeLiEtAl2019} show that over-parametrization helps neural network training. These papers usually assume that the networks are rich enough to achieve zero training error. This is not necessarily the case for the relaxation method described above, however the idea is related: We increase the number of network weights and layers in the hope to enable the optimization algorithms to find better minima. This idea is made more concrete in Example \ref{example:nn-relaxation} below. 

\begin{example}
  \label{example:nn-relaxation}

  For the neural network Example \ref{example:nn} the relaxation strategies can be interpreted as follows. We start by making $r$ independent copies of the weights $X^T = [x^1, \cdots, x^\theta] \in \real^{n \times \theta}$ to obtain the new weights $X_k^T = [x^1_k, \dots, x^\theta_k] \in \real^{n \times \theta}$ for $k=1, \dots, r$, thereby effectively widening or ``over-parametrizing'' the layer $\phi(X f_{in})$ from $\real^\theta$ to $\real^{\theta r}$ giving the new hidden layer
  \begin{align*}
    h & := [\phi(X_1 f_{in}), \dots, \phi(X_r f_{in})] \\
    & = [\phi(x^1_1 \cdot f_{in}), \dots, \phi(x^\theta_1 \cdot f_{in}), \cdots, \phi(x^1_r \cdot f_{in}), \dots, \phi(x^\theta_r \cdot f_{in})] \in \real^\theta \otimes \real^r.
  \end{align*}
  In other words, we made $r$ copies of the layer $\phi(X \cdot)$ with new independent weights, all given the same input $f_{in}$. The downstream layers $f_{out}$ only accept $\theta$ numbers as input, so we introduce an extra linear layer $h \to \bar{h} \in \real^\theta$ with new weights $Z$ to reduce the dimension. We have multiple options for the layer output $\bar{h}$:
  \begin{enumerate}
    \item With weights $Z \in \real^r$, and $h^l_k = \phi(x^l_k \cdot f_{in})$, we can reduce by
      \[
	\bar{h}^l = \sum_{k=1}^r z_k h^l_k,
      \]
      which is equivalent to the simple relaxation \eqref{eq:simple_relaxation}.
    \item With weights $Z \in \real^\theta \otimes \real^r$, we can reduce by 
      \[
	\bar{h}^l = \sum_{k=1}^r z^l_k h^l_k,
      \]
      which is equivalent to the block relaxation \eqref{eq:block-relaxation}.

    \item With $Z \in \real^{\theta \times \theta r}$, we can use a fully connected layer
      \[
        \bar{h} = Z h,
      \]
      which allows even more freedom than the block relaxation \eqref{eq:block-relaxation}.
  \end{enumerate}
  Any of these strategies effectively widen and deepen the original network and are therefore loosely related or over-parametrization.
\end{example}

Although neural networks provide the original motivation for the relaxation idea, we analyze these methods more rigorously for compressed sensing. This area provides non-convex optimization problems as well, but the theoretical background is much better understood. Contrary to \eqref{eq:main-question}, we consider the simplified problem 
\begin{align}
  G(x_1, \dots, x_r, \tilde{z}) & \stackrel{?}{\lesseqqgtr} g(x_j), &
  j & = 1, \dots, r,
  \label{eq:main-question-fixed-x}
\end{align}
where we only optimize the selector $z$. The second and third observation after \eqref{eq:main-question} still apply. In particular, this optimization problem never evaluates all possible $r^\theta$ combinations of $f([x^1_{k_1}, \dots, x^\theta_{k_\theta}])$ but instead is a convex problem in the $r\theta$ dimensional variable $z$. Nonetheless, in Section \ref{sec:cs-relaxation} we show that the relaxed problem can find optimal combinations. One may try to incorporate an $x_j$ optimization as well by a perturbation argument, but this is left for future research.

\subsection{Comparison of Probabilities}
\label{sec:relaxation-probabilities}

In this section, we compare the probabilities to find global optimizers either with $r$ random initial values in \eqref{eq:multiple-initial} or with the full block relaxed optimization problem \eqref{eq:block-relaxation}. The purpose of this discussion is to better understand the prospects of the latter method and therefore, we only consider some informal estimates in a highly idealized scenario. We consider a more rigorous analysis for the compressed sensing in Section \ref{sec:cs-relaxation} below, but for other areas, such as neural networks, it remains unknown to what extend the given estimates are legitimate.

For $r$ random initial values in \eqref{eq:multiple-initial} some natural assumptions are
\begin{enumerate}
  \item There is an ``attractor'' $A$ of the global minimum, meaning that for each initial value $x \in A$ our optimization method of choice (e.g. gradient descent) converges to the global optimum $\min_x \ell[f(x)]$.
  \item Each initial guess $x_k$ is sampled from i.i.d random variables $X_k$.
\end{enumerate}
For the block relaxed optimization problem \eqref{eq:block-relaxation} we assume:
\begin{enumerate}
  \item There are sets $B^l$, $l=1, \dots, \theta$ such that for each initial choice $x^l \in B^l$ and every initial selectors $z^l_k$, $\ell=1, \dots, \theta$, $k=1, \dots, r$, the optimization method of choice (e.g. gradient descent) applied to the block relaxed problem \eqref{eq:block-relaxation} converges to the global optimum $\min_x \ell[f(x)]$ with probability $p_{select}$.
  \item Each initial guess $x^l_k$, $\ell=1, \dots, \theta$, $k=1, \dots, r$ is sampled from i.i.d random variables $X^l_k$.
\end{enumerate}
The first assumption is quite severe and entails that for any initial selectors $z_k^l$ the optimizer can find an optimal selection of the blocks $x^1_{k^1}, \dots, x^\theta_{k^\theta}$ among all possible combinations.  This will be analyzed carefully in the compressed sensing example in Section \ref{sec:cs-relaxation}. For neural networks the assumption is unrealistic because the relaxed network likely has a smaller global minimum than the un-relaxed one. Without changing the arguments below, one can e.g. assume that the optimization of the block relaxed problem converges to a minimum that is smaller than $\min_x \ell[f(x)]$. In order to account for the fact that we may not find an optimal balance of the pieces $f^l(x^l_k)$, $k=1, \dots, r$, we added the extra probability $p_{select}$ to do so successfully.

In the following, we use the abbreviations
\begin{align*}
  p & := P(X_1 \in A), &
  p_l & := P(X^l_1 \in B^l).
\end{align*}
Since all guesses $X_k$ are i.i.d., for the optimization of $r$ repeated trials the probability of success is 
\begin{equation}
  \begin{aligned}
    P(\text{success $r$ trials})
    & = P(\exists k \in \{1, \dots, r\}: \, X_k \in A) \\
    & = 1 - P(\forall k \in \{1, \dots, r\}: \, X_k \not\in A) \\
    & = 1 - \prod_{k=1}^r P(X_k \not\in A) \\
    & = 1 - P(X_1 \not\in A)^r \\
    & = 1 - (1-p)^r.
  \end{aligned}
  \label{eq:successful-guess-probability}
\end{equation}
With the events $SELECT$ that the block relaxation \eqref{eq:block-relaxation} finds the global optimum and $INITIAL := \forall l \in \{1, \dots, \theta\}: \, \exists k \in \{1, \dots, r\}: \, X_k^l \in B^l$ of guessing good initial values, the probability that the block-relaxed optimization \eqref{eq:block-relaxation} is successful is
\begin{align*}
  P(\text{success block relaxation})
  & = P(SELECT \cap INITIAL) \\
  & = P(SELECT | INITIAL) P(INITIAL).
\end{align*}
The first probability of the right hand side is $p_{select}$. With the independence of all blocks $l$, the second can be calculated analogously to \eqref{eq:successful-guess-probability}, which yields
\begin{equation*}
  P(INITIAL) = \prod_{l=1}^\theta P(\exists k \in \{1, \dots, r\}: \, X^l_k \in B^l)
    = \prod_{l=1}^\theta 1 - (1-p_l)^r.
\end{equation*}
and thus
\[
  P(\text{success block relaxation})
  = p_{select} \prod_{l=1}^\theta 1 - (1-p_l)^r.
\]
For easier comparison, let us approximate the success probabilities by some simpler statements.  By a first order Taylor expansion for small $q$ we have $1-q \approx e^{-q}$ and $1-e^{-qr} \approx qr$ and thus
\begin{equation}
  1 - (1-q)^r \approx 1 - (e^{-q})^r = 1-e^{-qr} \approx qr.
  \label{eq:repeat-probability}
\end{equation}
Applied to the success probabilities and assuming that $p_l$ is independent of $l$, we obtain
\begin{align*}
  P(\text{success $r$ trials}) & \approx pr \\
  P(\text{success block relaxation}) & \approx p_{select} (p_l r)^\theta.
\end{align*}
For the sake of comparing the two methods, we assume that $p \approx p_l^\theta$, which can be justified e.g. if $A \approx B^1 \times \cdots \times B^\theta$. Then we have
\begin{equation}
  \begin{aligned}
    P(\text{success $r$ trials}) & \approx p_l^\theta r \\
    P(\text{success block relaxation}) & \approx p_l^\theta (p_{select} r^\theta).
  \end{aligned}
  \label{eq:success-probabilities}
\end{equation}
In conclusion, for $r$ repeated trials we may achieve an $r$ fold increased chance of success and using $r$ block relaxations, which amounts to the same number of total guessed variables, we can hope for a improvement by a factor of $p_{select} r^\theta$. For $p_{select}$ close to one and large $\theta$ this success probability can be significantly larger. However, for the latter result we made quite significant assumptions, which we will only discuss for compressed sensing. In other cases it remains open how much of this potential improvement is realistic.

The above Taylor approximation is a rather crude argument, but in some limiting scenarios the approximations become exact. In order to define the limits properly, first note that the quantities $p$, $r$ and $p_l$ typically depend on some problem parameters such as the dimension of $x$. We denote this parameter by $\gamma$, so that $p = p(\gamma)$ and $r=r(\gamma)$ and $p_l = p_l(\gamma)$. 

We now assume that the success probabilities $p_l$ (or $p)$ go to zero faster than the number of guesses $r$ goes to infinity, i.e. 
\begin{align*}
  \lim_{\gamma \to \infty} p_l & = 0, &
  \lim_{\gamma \to \infty} r & = \infty, &
  \lim_{\gamma \to \infty} p_l r & = 0.
\end{align*}
Then, by Lemma \ref{lemma:prob-compare} (with $q=1/r$) in the Appendix \ref{appendix:limits}, the Taylor approximation \eqref{eq:repeat-probability} used in the derivation of the success probabilities \eqref{eq:success-probabilities} is accurate in the limit

\[
  \lim_{\gamma \to \infty} \frac{1- [1-p_l]^r}{p_lr} = 1
\]
and likewise for $p_l$ replaced by $p$.

\section{Application to Compressed Sensing}
\label{sec:cs-relaxation}

In this section, we consider the block relaxation \eqref{eq:block-relaxation} applied to the compressed sensing problem of Example \ref{example:lp} in some more detail. In Section \ref{sec:cs-block-relaxation} we describe the method and the main result of this paper. Since the result is quite technical, Section \ref{sec:cs-example} provides some more concrete scenarios and connections to the success probabilities in Section \ref{sec:relaxation-probabilities}. Finally, Section \ref{sec:cs-block-relaxation-proof} contains the proof of the main result.

\subsection{Model Problem}
\label{sec:cs-block-relaxation}

Let us first recall Example \ref{example:lp}. For $0 < p \le 1$, a matrix $A \in \real^{m \times N}$ and vector $y \in \real^m$ consider the $\ell_p$-minimization
\begin{equation}
  \begin{aligned}
    & \min_{x \in \real^N} \|x\|_p^p, &
    & \text{s.t.}
    & Ax & = y.
  \end{aligned}
  \label{eq:cs-p-2}
\end{equation}
In addition, upon possibly rescaling the right hand side $y$, we assume without loss of generality that $x \in [-1, 1]^N$, which will simplify our analysis below. 

As in Example \ref{example:lp}, we assume that $N=n\theta$ and split the vector $x$ and sensing matrix $A$ into corresponding blocks
\begin{equation}
  \begin{aligned}
    A & = \begin{pmatrix} A^1 & \cdots & A^\theta \end{pmatrix} \in \real^{m \times n \theta} \\
    x & = \begin{pmatrix} x^1 & \cdots & x^\theta \end{pmatrix} \in \real^{n \theta}.
  \end{aligned}
  \label{eq:block-sensing-A-x}
\end{equation}
Using the block structure, the compressed sensing problem becomes
\begin{equation*}
  \begin{aligned}
    & \min_{x \in \real^N} \sum_{l=1}^\theta \|x^l\|_p^p, &
    & \text{s.t.}
    & \sum_{l=1}^\theta A^l x^l & = y.
  \end{aligned}
\end{equation*}
Repeating the derivation of the block relaxed method \eqref{eq:block-relaxation}, we first make $r$ guesses $x^l_k$ for each block and insert a selection $\|x^l\|_p^p = \sum_{k=1}^r \|x^l_k\|_p^p z^l_k$ with selectors $z^l_k$ into the compressed sensing problem 
\begin{equation}
  \begin{aligned}
    & \min_{z^l \in \{e_1, \dots e_r\}, l=1, \dots, \theta} \sum_{l=1}^\theta \left(\sum_{k=1}^r \|x^l_k\|_p^p |z^l_k| \right) &
    & \text{s.t.}
    & \sum_{l=1}^\theta A^l \left( \sum_{k=1}^r x^l_k z^l_k \right) & = y.
  \end{aligned}
  \label{eq:cs-block-select}
\end{equation}
In the objective function we have used $|z^l_k|$ instead of $z^l_k$ itself because this leads to a standard $\ell_1$-minimization problem after relaxation. As long as $z_k^l$ are standard basis vectors this does not change the problem.

In order to obtain a more compact notation, let $X^l \in \real^{n \times r}$ be the matrices with columns $x^l_k$, $k=1, \dots, r$. In addition, we replace the tensor $z \in \real^r \otimes \real^\theta$ with a corresponding block vector in $\real^{n \theta}$ and obtain the block matrix and vector
\begin{equation*}
  \begin{aligned}
    X & = \begin{pmatrix} X^1 & & \\ & \ddots & \\ & & X^\theta \end{pmatrix} \in \real^{n \theta \times r \theta}
    \\
    z & = \begin{pmatrix} z^1 & \cdots & z^\theta \end{pmatrix} \in \real^{r \theta}.
  \end{aligned}
\end{equation*}
Then, relaxing \eqref{eq:cs-block-select} to any $z^l \in \real^r$, and setting $R := r \theta$, we obtain
\begin{equation}
  \begin{aligned}
    & \min_{z \in \real^R} \sum_{j=1}^{N} \sum_{k=1}^R |X_{jk}|^p |z_k|
    & & \text{s.t.}
    & A X z & = y.
  \end{aligned}
  \label{eq:cs-block-relaxation}
\end{equation}
First note that we omit the optimization of $X_{jk}$ and therefore confine ourselves to the simplified question \eqref{eq:main-question-fixed-x} if the block relaxation can find a good selector $z$ given that $X$ already contains parts of the global solution in its columns. As a consequence, in order to obtain non-zero probabilities to find a solution we will assume that the correct solutions are discrete. This problem is still $NP$ hard in the worst case and discussed in Appendix \ref{sec:np-hard}. The remaining $z$ optimization in \eqref{eq:cs-block-relaxation} is a weighted $\ell_1$ optimization problem and hence convex, unlike the $\ell_p$-minimization we started from. 

Let us now consider to what extend the block relaxed problem can recover the optimal discrete selectors $z^l \in \{e_1, \dots, e_r\}$ in \eqref{eq:cs-block-select} before the relaxation and therefore find the optimal combination $[X^1 z^l, \cdots X^\theta z^\theta]$ of the initial guesses in $X$. The main observation is that the un-relaxed combined selectors $z=[z^1, \dots, z^\theta]$ are $\theta$-sparse and therefore can potentially be recovered by a compressed sensing problem of type \eqref{eq:cs-block-relaxation}. For reference, the following lemma summarizes this idea.

\begin{lemma}
  \label{lemma:block-relaxed-recovery}
  Assume that:
  \begin{enumerate}
    \item For the discrete selectors $\bar{z} = [\bar{z}^1, \dots, \bar{z}^\theta]$ with $\bar{z}^l \in \{e_1, \dots, e_r\}$ the block vector $\bar{x} = [\bar{x}^1, \dots, \bar{x}^\theta] = [X^1 \bar{z}^1, \dots, X^\theta \bar{z}^\theta]$ is a global minimizer of the $\ell_p$-minimization \eqref{eq:cs-p-2}. 

    \item The continuous selector $z = [z^1, \dots, z^\theta]$ is the minimizer of the block relaxed problem \eqref{eq:cs-block-relaxation}, with fixed $X^l$.

  \end{enumerate}

  If the $\theta$-sparse vector $\bar{z}$ is the unique minimizer of the block relaxation \eqref{eq:cs-block-relaxation}, then $z = \bar{z}$ and the reconstruction $Xz$ from the block relaxed method is a global minimizer of the $\ell_p$ minimization \eqref{eq:cs-p-2}.

\end{lemma}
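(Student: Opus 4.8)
The plan is to treat Lemma \ref{lemma:block-relaxed-recovery} as what it essentially is: a bookkeeping step that matches the block-diagonal structure of $X$ with the definitions of $\bar z$ and $\bar x$, so that the conclusion drops out once the stated hypotheses are assumed. There is no genuine optimization argument to run here; the substantive work is precisely the hypothesis ``$\bar z$ is the unique minimizer'' and is deferred to the main theorem.

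First I would record that $\bar z$ is feasible for the relaxed problem \eqref{eq:cs-block-relaxation}. By the block-diagonal form of $X$ (with the blocks $X^l$ on the diagonal) and the stacking $\bar z = [\bar z^1, \dots, \bar z^\theta]$, one has $X\bar z = [X^1\bar z^1, \dots, X^\theta\bar z^\theta] = \bar x$, hence $AX\bar z = A\bar x = y$, because $\bar x$, being a global minimizer of \eqref{eq:cs-p-2} by the first assumption, is in particular admissible. As a consistency remark (not needed for the logic, but explaining why the hypothesis is natural), evaluating the objective of \eqref{eq:cs-block-relaxation} at the $\theta$-sparse vector $\bar z$ reproduces $\|\bar x\|_p^p$: the block-diagonal structure kills all cross terms, and $|\bar z^l_k| \in \{0,1\}$ selects exactly one column $x^l_{k^l}$ of each block $X^l$, so $\sum_{j,k}|X_{jk}|^p|\bar z_k| = \sum_{l=1}^\theta \|x^l_{k^l}\|_p^p = \|\bar x\|_p^p$.

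Next I would combine the two remaining ingredients. By the second assumption, $z$ is a minimizer of \eqref{eq:cs-block-relaxation}; by hypothesis, $\bar z$ is its \emph{unique} minimizer; therefore $z = \bar z$. Consequently $Xz = X\bar z = \bar x$, and $\bar x$ is a global minimizer of the $\ell_p$-problem \eqref{eq:cs-p-2} by the first assumption, which is exactly the claim. This completes the proof.

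The main obstacle, then, is not in this lemma at all but in discharging its hypothesis: one must show that the $\theta$-sparse vector $\bar z$ is the unique minimizer of the weighted $\ell_1$-problem \eqref{eq:cs-block-relaxation}. This is a sparse-recovery statement for the effective sensing matrix $AX$ with column weights $\sum_j |X_{jk}|^p$, and I expect it to require a null-space-property- or RIP-type condition adapted to $AX$ and the weights, together with an argument that the random guesses populating the columns of $X$ make such a condition hold with the probabilities discussed in Section \ref{sec:relaxation-probabilities}.
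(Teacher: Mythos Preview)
Your proposal is correct and matches the paper's treatment: the paper states this lemma merely ``for reference'' to summarize the preceding discussion and gives no separate proof, because the conclusion is indeed an immediate consequence of the hypotheses once one unwinds the block structure, exactly as you do. Your write-up is in fact more detailed than the paper's, and your closing paragraph correctly identifies that the real content lies in establishing the unique-minimizer hypothesis, which is precisely what Theorem~\ref{th:cs-block-relaxation} addresses.
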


The assumption that the $\theta$-sparse vector $\bar{z}$ is the unique minimizer of \eqref{eq:cs-block-relaxation} is a classical nonuniform recovery statement in compressed sensing and requires some conditions on the sensing matrix 
\[
  A X =
  \begin{pmatrix}
    A^1 X^1 & \cdots & A^\theta X^\theta
  \end{pmatrix}
\]
such as variants of a RIP condition suitable for weighted compressed sensing \cite{RauhutWard2016}. If we have only one block $\theta=1$ and $A$ is orthogonal with $m=n$, then $A X$ satisfies an RIP if $X$ does. On the other hand, if we have the maximal number of blocks with $n=1$ and $\theta =N$, each block is a rank one matrix and a RIP is impossible. Between these two extremes, the matrix $X$ induces some extra randomness into the sensing matrix $AX$, which will help us prove sparse recovery results below, see also \cite{KasiviswanathanRudelson2019}.

In summary, we have two conditions for the matrix $X$: It must contain the global optimizer in its blocks and $AX$ must admit sparse recovery. With the given block structure, the first condition entails that the sensing matrix $AX$ has columns $A^l x^l$, $l = 1, \dots, \theta$. For unfavorable $x$, these may render unique sparse recovery impossible. We avoid this problem with high probability by choosing a random correct sparse $x$ and then define a corresponding right hand side $y = Ax$.

\begin{remark}
  Note that although we assume that $z$ can be uniquely recovered by \eqref{eq:cs-block-relaxation}, this does not imply that the solution $x$ of the $\ell_p$ minimization problem \eqref{eq:cs-p-2} is unique. If $z$ is unique but $x$ is not, this merely implies that $X$, only contains one solution in its range, for otherwise the unique recovery would be violated.
\end{remark}

Throughout the article, for a matrix $C \in \real^{a,b}$, and a subset $R \subset \{1, \dots, b\}$, the matrix $C_{\cdot, R}$ consists of the columns of $C$ with indices in $R$. We are now ready to state the main theorem, which provides the probability that the block relaxed convex problem \eqref{eq:cs-block-relaxation} selects the correct blocks or equivalently $p_{select}$ in \eqref{eq:success-probabilities}.

\begin{theorem}
  \label{th:cs-block-relaxation}

  \begin{enumerate}

    \item Let $x = [x^1, \dots, x^\theta] \in [-1,1]^N$ be a vector with i.i.d random entries on a given support $S \subset \{1, \dots, N\}$ with
    \begin{align}
      \E[x_j] & = 0, &
      \E[x_j^2] & = p_x, &
      j & \in S
      \label{eq:random-x}
    \end{align}
    and let $S^l$ be the indices of $S$ in block $l=1, \dots, \theta$.

    \item Let $X^l \in [-1,1]^{n \times r}$ be matrices that contain the vectors $x^l$ in unknown columns $k^l$ and with remaining entries i.i.d. random numbers with 
    \begin{align}
      \E[X^l_{j,k}] & = 0, &
      \E[|X^l_{j,k}|] & = \nu, &
      k & \ne k^l, &
      l & = 1, \dots, \theta.
      \label{eq:random-X}
    \end{align}
    For abbreviation, let $T = \{k^l | \, l=1, \dots, \theta\}$.

    \item Define the constants
    \begin{align}
      F_S(A)^2 & := \min_{l = 1, \dots, \theta} \|A^l_{\cdot, S^l}\|_F^2, & 
      M(A)^2 & := \max_{l=1, \dots, \theta} \|A^l\|^2
      \label{eq:matrix-conditions}
    \end{align}
    and
    \begin{equation}
      \overline{s} := \max \left\{|S^l|: \, l \in \{1, \dots, \theta\}\right\}.
      \label{eq:def-s-bar}
    \end{equation}

  \end{enumerate}
  Then for any $\alpha \ge 0$ and $0 \le \delta \le 1$ related by 
  \begin{equation}
    1-\delta = \frac{\sqrt{|T|} \overline{s}}{\alpha F_S(A) \sqrt{p_x}}
    \label{eq:delta-alpha}
  \end{equation}
  with probability at least 
  \begin{multline*}
    1 - \Bigg[ 
    2 (R-|T|) \exp \left( - \frac{\nu^2 n^2}{n + 2 M(A)^2\alpha^2} \right)
    \\
  + 2 \left( \frac{12}{\delta} \right)^{|T|} \exp\left(- c \frac{F_S(A)^2}{M(A)^2} \min \left\{\frac{p_x^2 \delta^2}{4 K^4}, \frac{p_x \delta}{2 K^2} \right\} \right) \Bigg]
  \end{multline*}
  the solution $z$ of the block relaxed problem \eqref{eq:cs-block-relaxation} satisfies $x = Xz$, for some positive absolute constants $c$ and $K$.
\end{theorem}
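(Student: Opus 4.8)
The plan is to run the classical inexact-duality (dual-certificate) argument for weighted $\ell_1$-minimization with the ``effective'' sensing matrix $\Phi := AX$, supplying the randomness that makes the certificate work from the entries of $x$ and $X$. Write $w_k := \sum_{j=1}^{N} |X_{jk}|^p$ for the weights, so that \eqref{eq:cs-block-relaxation} reads $\min_z \sum_k w_k |z_k|$ subject to $\Phi z = y$ with $y = Ax$. Let $\bar z$ be the selector supported on $T$ with $\bar z_k = 1$ there, so that $X\bar z = x$ and $\Phi\bar z = y$; because $X$ is block diagonal, the columns of $\Phi_{\cdot,T}$ are the vectors $A^l x^l$ (one per block $l$), which are independent across $l$ (the $x^l$ live on disjoint blocks of the i.i.d.\ vector $x$) and have mean zero. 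It is standard (e.g.\ \cite{FoucartRauhut2013}) that $\bar z$ is the \emph{unique} minimizer of the weighted $\ell_1$ problem once (i) $\Phi_{\cdot,T}$ has full column rank and (ii) there is a vector $u$ with $(\Phi^*u)_k = w_k$ for $k\in T$ and $|(\Phi^*u)_k| < w_k$ strictly for $k\notin T$; then $z = \bar z$ and $Xz = x$. For (ii) I take the least-squares ansatz $u := \Phi_{\cdot,T}\bigl(\Phi_{\cdot,T}^*\Phi_{\cdot,T}\bigr)^{-1} w_T$, which automatically gives $(\Phi^*u)_T = w_T$, so the whole proof reduces to (a) a lower bound on $\sigma_{\min}(\Phi_{\cdot,T})$ and (b) the strict inequality $|(\Phi^*u)_k| < w_k$ off $T$.

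For (a), fix a unit vector $v$ and write $\Phi_{\cdot,T}v = \sum_l v_l A^l x^l = B\,x_S$ with $B = \bigl[v_1 A^1_{\cdot,S^1}\,\big|\,\cdots\,\big|\,v_\theta A^\theta_{\cdot,S^\theta}\bigr]$, so $\|B\|\le M(A)$ and $\E\|\Phi_{\cdot,T}v\|^2 = p_x\|B\|_F^2 \ge p_x F_S(A)^2$. Since the entries of $x$ on $S$ are bounded, hence sub-gaussian with norm at most $K$, a Hanson--Wright (Bernstein-type) concentration inequality controls $\|\Phi_{\cdot,T}v\|^2$ around its mean with the deviation term $\exp\!\bigl(-c\,\tfrac{F_S(A)^2}{M(A)^2}\min\{p_x^2\delta^2/K^4,\,p_x\delta/K^2\}\bigr)$; combined with a union bound over a $\delta$-net of the unit sphere of $\real^{|T|}$, of cardinality at most $(12/\delta)^{|T|}$, this upgrades to $\sigma_{\min}(\Phi_{\cdot,T}) \ge (1-\delta)\sqrt{p_x}\,F_S(A)$ on an event whose failure probability is the second bracketed term of the theorem. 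In particular $\Phi_{\cdot,T}$ has full column rank, and since $w_{k} = \|x^l\|_p^p \le |S^l| \le \overline s$ for the on-support index $k$ in block $l$ (using $x\in[-1,1]^N$ and $p\le 1$, whence $\|w_T\|\le\sqrt{|T|}\,\overline s$), we get $\|u\| \le \|w_T\|/\sigma_{\min}(\Phi_{\cdot,T}) \le \sqrt{|T|}\,\overline s/\bigl((1-\delta)\sqrt{p_x}\,F_S(A)\bigr) = \alpha$, exactly the $\alpha$ linked to $\delta$ by \eqref{eq:delta-alpha}.

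For (b), fix $k\notin T$ in block $l$ and condition on $\Phi_{\cdot,T}$, hence on $u$; since distinct columns of $X^l$ have independent entries, $\Phi_{\cdot,T}$ is independent of the column $X^l_{\cdot,\kappa}$ that generates the $k$-th column of $\Phi$, so $u$ is a fixed vector in this conditioning. Then $(\Phi^*u)_k = \sum_{j=1}^{n} X^l_{j,\kappa}\,g_j$ with $g := (A^l)^*u$, $\|g\| \le M(A)\|u\| \le M(A)\alpha$, while $w_k = \sum_{j=1}^{n}|X^l_{j,\kappa}|^p$ has mean $\sum_j \E|X^l_{j,\kappa}|^p \ge n\nu$ (again $|X|\le 1$, $p\le 1$). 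Applying Bernstein's inequality to the sum $\sum_{j=1}^n\bigl(|X^l_{j,\kappa}|^p - \sign((\Phi^*u)_k)\,X^l_{j,\kappa}g_j\bigr)$ of independent bounded mean-zero terms, with mean at least $n\nu$ and total variance $O(n + \|g\|^2) = O(n + M(A)^2\alpha^2)$, gives $P\bigl(|(\Phi^*u)_k| \ge w_k\bigr) \le 2\exp\!\bigl(-\nu^2 n^2/(n + 2M(A)^2\alpha^2)\bigr)$; a union bound over the $R-|T|$ indices $k\notin T$ produces the first bracketed term, and a final union bound of the two failure events gives the stated probability.

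I expect the main obstacle to be step (a) together with its coupling to (b): one must calibrate the net resolution and the accuracy used in the Hanson--Wright bound so that the resulting lower bound on $\sigma_{\min}(\Phi_{\cdot,T})$ yields precisely the $\alpha$ of \eqref{eq:delta-alpha}, and then observe that this same $\alpha$ is exactly what controls $\|g\| = \|(A^l)^*u\|$ in the off-support estimate. The conditioning in (b) is legitimate because $u$ depends only on the on-support columns of $X$, which are independent of each individual off-support column; but keeping the dependence on $\delta$, $\alpha$, $K$, $p_x$, $F_S(A)$, $M(A)$ and $\overline s$ consistent across the $\sigma_{\min}$ estimate, the bound $\|u\|\le\alpha$, and the Bernstein/Hanson--Wright exponents — including the off-diagonal cross terms $\langle A^l x^l, A^{l'}x^{l'}\rangle$ that enter the fixed-$v$ bound — is the real bookkeeping burden of the proof.
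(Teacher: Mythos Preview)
Your proposal is correct and follows essentially the same route as the paper: the dual certificate $u=(\Phi_{\cdot,T}^*)^+ w_T$, a Hanson--Wright bound plus a $(12/\delta)^{|T|}$ net to control $\sigma_{\min}(\Phi_{\cdot,T})$ (hence $\|u\|\le\alpha$ via \eqref{eq:delta-alpha}), and then a Hoeffding/Bernstein tail for each off-support column conditioned on $u$, followed by a union bound. The only cosmetic differences are that the paper packages the on-support concentration through a weighted norm $\|\cdot\|_A$ and a vectorization of $X$ before invoking Hanson--Wright, and in step~(b) it uses Hoeffding (after splitting $|\langle\cdot,\cdot\rangle|$ into the two signed events) rather than Bernstein; your phrase ``mean-zero terms, with mean at least $n\nu$'' should be read that way, since $\operatorname{sign}((\Phi^*u)_k)$ is itself random.
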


The theorem is proven in Section \ref{sec:cs-block-relaxation-proof}. The first two assumptions of the theorem contain the setup from our relaxation method. We fix a support $S$, sample a corresponding sparse vector $x=[x^1, \dots, x^\theta]$ and define a right hand side $y = Ax$. Then, we try to recover $x$ via the block relaxation \eqref{eq:cs-block-relaxation}. To this end, we sample the ``initial guesses'' or blocks $X^l$ and assume that one combination of columns of $X^l$ contain the original $x$. Thus, the distribution of $x^l$ in the theorem is not the one used for generating the correct solution but rather the conditional distribution $P(x^l_j | \, x^l_j = X_{j,k^l} \, x^l_j \ne 0)$ given that $x^l_j$ is non-zero and matches an entry of the matrix $X$. The theorem then states that with high probability the solution $z$ of the block relaxed problem \eqref{eq:cs-block-relaxation} picks the original $x = Xz$.

The conditions on the matrix $A$ enter via the constants \eqref{eq:matrix-conditions} and are comparatively weak. The quantity $F_S(A)^2/M(A)^2$ is related to the stable rank $\|A^l\|_F^2 / \|A^l\|^2$ of a matrix and equals to $\min_l|S^l|$ if all blocks $A^l$ have orthonormal columns, see e.g. \cite{KasiviswanathanRudelson2019} for more information. Note that all constants that depend on $A$ only do so via individual blocks $A^l$ and are independent of any relation between different blocks $A^l$ and $A^j$, $j \ne l$. Therefore, the given conditions are much weaker than a RIP and allow e.g. repetitions $A^l = U$ of identical unitary matrices. We consider those conditions more closely for a more concrete example in Section \ref{sec:cs-example} below.

The theorem is reminiscent of standard sparse recovery results, with weighted $\ell_1$-norm and a special structure in the sensing matrix $AX$. Moreover, it provides a positive answer to the question raised in \eqref{eq:main-question-fixed-x} if the block relaxation can find the optimal combination among the $r^\theta$ possible combinations of blocks $x^l_k$. To this end, first note that the block relaxed optimization problem \eqref{eq:cs-block-relaxation} is convex and can be efficiently solved with e.g. reweighted least squares algorithms. The resulting vector $x = Xz$ is a $|S|$-sparse solution of the constraint equation $Ax = y$. In many applications this is already what we want. Anyways, if in addition the original non-convex $\ell_p$-minimization problem \eqref{eq:cs-p-2} has a unique $|S|$-sparse solution, it must be $x = Xz$ and we also have a solution of the non-convex optimization problem. This was our original question in \eqref{eq:main-question-fixed-x} and requires weaker conditions than the classical $RIP$ for $\ell_1$-minimization, see e.g. \cite {ChartrandStaneva2008,FoucartLai2009,Sun2012}.

Finally, the assumption that the blocks $X^l$ contain a correct guess is quite severe or rather unlikely, even for the discrete cases we consider below. However, recall the purpose of this result is to investigate the feasibility of the relaxation argument from Section \ref{sec:relaxation} in a simple model problem, not to provide a practical algorithm. That would certainly include an optimization of the blocks $X^l$ as in the general block relaxed problem \eqref{eq:block-relaxation}. In addition, the assumption on the sensing matrix $A$ still allow polynomial-time reductions of $NP$-hard problems to $\ell_0$-minimization, although only smaller instances than for general matrices $A$, see Appendix \ref{sec:np-hard} for a discussion.

\subsection{Recovery Probability}
\label{sec:cs-example}

In order to disentangle all requirements and statements in \ref{th:cs-block-relaxation}, in this section, we consider some more specific scenarios. To this end, in the following let $\gtrsim$, $\lesssim$ and $\sim$ denote greater, smaller and equivalence up to some generic constants independent of the problem dimensions, sparsity and expectations such as $\nu$ or $p_x$.

First, we assume that the support $S$ is equidistributed among the blocks, i.e. that there is some $s$ with $|S| = \theta s$ and 
\[
  |S^l| \sim s.
\]
If we choose $S$ uniformly at random, this is satisfied with high probability for sufficiently large $s$. Indeed, $|S^l|$ is distributed by a hypergeometric distribution so that the observation easily follows from Chebyshev's inequality.

Next assume that $m = n$, so that the blocks $A^l$ are square matrices, and that the columns of $A^l$ are (almost) orthonormal. This implies that
\begin{align*}
  \|A_l\| & \sim 1, & 
  \|A^l\|_F^2 & \sim n, & 
  \|A^l_{\cdot, S^l}\|_F^2 & \sim s
\end{align*}
and therefore
\begin{align*}
  F_S(A)^2 & \sim s, &
  \overline{s} & \sim s, &
  M(A) & \sim 1, &
  \frac{F_S(A)^2}{M(A)^2} & \sim s.
\end{align*}
There are no relations between the columns of different blocks, so e.g. it is legitimate to choose all blocks equal, which clearly violates the RIP condition.

Finally, we assume that we have some good a-priory knowledge of the size $|S|$ and choose 
\begin{align*}
  \nu & \sim s/n, &
  p_x & \sim 1.
\end{align*}
The second probability of $p_x$ states that given the information that we are on the support of $x$, the entries are not overly strongly clustered around zero. With these constants, the probability of recovery failure of Theorem \eqref{th:cs-block-relaxation} is at most
\begin{equation}
  2 \exp \left( - c \frac{s^2}{n + M(A)^2\alpha^2} + \ln(R) \right)
  + 2 \exp\left(- c s \min \left\{\frac{p_x^2 \delta^2}{4 K^4}, \frac{p_x \delta}{2 K^2} \right\}  + |T| \ln \left(\frac{12}{\delta} \right) \right) 
\end{equation}
for some generic constant $c$, which may differ from the one in the theorem and change in the calculations below. The algorithm does not depend on the choice of $\alpha$ and $\delta$, which we can now choose to bound this failure probability. To this end, let us choose $\delta \sim (1-\delta) \sim 1$ so that by \eqref{eq:delta-alpha} and $|T|=\theta$, we have
\begin{equation*}
  1 \sim (1-\delta) \sim \frac{\sqrt{|T|} \overline{s}}{\alpha F_S(A)} \sim \frac{\sqrt{\theta s}}{\alpha}
  \quad \Leftrightarrow \quad
  \alpha \sim \sqrt{\theta s}.
\end{equation*}
Thus, the failure probability reduces to 
\begin{equation}
  2 \exp \left( - c \frac{s^2}{n + M(A)^2 \theta s} + \ln(R) \right)
  + 2 \exp\left(- c s + C \theta \right) =: (I) + (II)
  \label{eq:cs-failure-probability}
\end{equation}
for some new generic constant $C$. Using $M(A) \sim 1$, we have $\frac{s^2}{n + M(A)^2 \theta s} \gtrsim \min \left\{ \frac{s^2}{n}, \, \frac{s}{\theta} \right\}$, which must be larger than $\ln(R) = \ln(r\theta)$ for the exponent of $(I)$ to be negative. Therefore, we must have
\begin{equation}
  r \lesssim \min \left\{\frac{1}{\theta} e^{\frac{s}{\theta}}, \frac{1}{\theta} e^{\frac{s^2}{n}} \right\}.
  \label{eq:max-trials}
\end{equation}
The first component of the minimum ensures that $s \gtrsim \theta$, which implies that also $(II)$ has a negative exponent.

First note that the condition \eqref{eq:max-trials} limits the number $r = R/\theta$ of possible trials. Depending on the relative sizes of $s$, $n$ and $\theta$, this number can be exponentially large and the block relaxed scheme is able to correctly select an exponentially large number of pieces $x^l_{k^1}, \dots, x^l_{k_\theta}$ contained in the guesses $X^l$ for a non-linear problem.

Second, the condition \eqref{eq:max-trials} implies that the sparsity $s$ per block cannot be too small. The reason is that we have very limited assumptions on $A$. In particular the sensing matrix $AX$ contains the columns $A^l x^l$. If $x^l$ is overly sparse, this does not guarantee enough randomness to ensure sparse recovery.

With the probabilities $p_l$ that the blocks of $X^l$ contain the solution blocks $x^l$ and the success probability $p_{select}$ of sparse recovery from \eqref{eq:cs-failure-probability}, by the arguments in Section \ref{sec:relaxation-probabilities}, we have the probabilities
\begin{equation*}
  \begin{aligned}
    P(\text{success $r$ trials}) & \approx p_l^\theta r \\
    P(\text{success block relaxation}) & \approx p_l^\theta (p_{select} r^\theta)
  \end{aligned}
\end{equation*}
to recover a $|S|$-sparse vector with constraint $Ax = y$, with very weak conditions on $A$. If this matrix allows unique sparse recovery from $\ell_p$-minimization, it is also the global minimizer of \eqref{eq:cs-p-2}. Given the conditions in \eqref{eq:max-trials}, we can ensure that $p_{select}$ is close to one so that the block relaxation provides a $r^\theta/r$ enhanced chance to find the solution over $r$ repeated guesses.

In order for the probability $p_l$ to be non-zero, we need to sample $x^l$ and $X^l$ from discrete distributions. Even with this restrictive assumption, $p_l$ is still negligible and the resulting success probability of block relaxation is excessively small. This is not fully unexpected because with the given assumptions on the sensing matrix $A$ and discrete $x$ in e.g. $\{-1,-1/2,0,1/2,1\}$ the $\ell_p$-minimization problem is still $NP$-hard in general, see Appendix \ref{sec:np-hard}. Also recall that for a practical algorithm we would incorporate the blocks $X^l$ into the optimization as in \eqref{eq:block-relaxation}, which removes the requirement to correctly guess the blocks $x^l$ in one shot and with it the requirement of $x^l$ to be discrete.

\subsection{Proof of Theorem \ref{th:cs-block-relaxation}}
\label{sec:cs-block-relaxation-proof}

In this section, we prove Theorem \ref{th:cs-block-relaxation}. The proof follows standard lines for sparse recovery results, with some slight twists for the added structure. In Section \ref{sec:proof-setup}, we first introduce some notations and setup used throughout the entire section. Then, we show concentration estimates (Section \ref{sec:concentration}), RIP type results for fixed sparse subsets $S$ (Section \ref{sec:rip}) and then finally combine these results for a non-uniform recovery argument in Section \ref{sec:recovery}.

\subsubsection{Notations and Setup}
\label{sec:proof-setup}

Let all assumptions from Theorem \ref{th:cs-block-relaxation} be satisfied. We calculate the sparse recovery probability given that the blocks $x^l$ are contained in the columns of the respective matrices $X^l$. W.l.o.g, we assume that $x^l$ are always the first columns so that $X$ has the block structure
\begin{align}
  X & = \begin{pmatrix} x^1 &  \bar{X}^1 & & & \\ & \ddots & \ddots & & \\ & & x^\theta & \bar{X}^\theta \end{pmatrix}, &
  x^l & \in \real^{n \times 1}, & 
  \bar{X}^l & \in \real^{n \times r-1},
  \label{eq:sample-given-sparsity}
\end{align}
where $\bar{X}^l$ are i.i.d. random matrices. 

\begin{remark}
  In Theorem \ref{th:cs-block-relaxation}, we show a sparse recovery result only with high probability. Therefore, we must ensure that the matrices $X$ with a given sparsity pattern $S$ in one column $x^l$ are not included in the low probability set where spare recovery fails. Hence, we make this patter explicit in our proof.
\end{remark}

By assumptions \eqref{eq:random-x} and \eqref{eq:random-X} of Theorem \ref{th:cs-block-relaxation}, we have the following expectation, variances and $\psi_2$-norms:
\begin{equation}
  \begin{aligned}
    \E[x_j] & = 0, &
    \E[x_j^2] & = p_x, &
    \|x_j\|_{\psi_2} & \le K, & 
    j & \in S
    \\
    \E[X^l_{jk}] & = 0, &
    \E[(X^l_{jk})^2] & = p_X,&
    \|X^l_{jk} \|_{\psi_2} & \le K, &
    & \begin{array}{ll}
      j & = 1, \dots, n, \\
      k & = 1, \dots r-1
    \end{array},
  \end{aligned}
  \label{eq:expectations-assumption}
\end{equation}
for some $p_X \ge 0$ constant $K>0$ and $\psi_2$-norm defined by $\|x\|_{\psi_2} := \sup_{a \ge1} a^{-1/2} (\E[|x|^a])^{1/a}$, see e.g. \cite{RudelsonVershynin2013}. Note that all variances and $\psi_2$ norms are bounded because by assumption the entries of $X^l$, including the first column $x^l$, are in the interval $[-1,1]$.

\subsubsection{Concentration Estimates}
\label{sec:concentration}

In this section, we state concentration estimates for $\|AXu\|$ for some $u\in \real^R$. To this end, let us split an arbitrary vector $u \in \real^R$ according to the block structure \eqref{eq:sample-given-sparsity} of $X$ as 
\[
  u := \begin{pmatrix}
    v^1 & u^1 & \cdots & v^\theta & u^\theta
  \end{pmatrix}^T,
\]
with $v^l \in \real$ and $u^l \in \real^{r-1}$. Then the concentration inequality is shown with respect to the weighted norm
\begin{equation}
  \|u\|_A^2 :=  \|W_A u\| := \sum_{l=1}^\theta \left\{ p_x \|A_{\cdot, S^l}\|_F^2 |v^l|^2 + p_X \|A^l\|_F^2 \|u^l\|^2 \right\}.
  \label{eq:A-norm}
\end{equation}
with diagonal weight matrix
\begin{equation}
  W_A = \operatorname{diag}(\sqrt{p_x} \|A_{\cdot, S^1}\|_F, \sqrt{p_X} \|A^1\|_F, \dots, \sqrt{p_x} \|A_{\cdot, S^\theta}\|_F, \sqrt{p_X} \|A^\theta\|_F).
  \label{eq:A-norm-weight}
\end{equation}
Recall that $S^l$ are the indices in $S$ contained in the block $X^l$ of $X$. For the remainder of this section $c$ denotes a positive absolute constant.

\begin{proposition}
  \label{prop:factorization-concentration}
  Let $A$ and $X$ be the matrices defined in \eqref{eq:block-sensing-A-x} and \eqref{eq:sample-given-sparsity} with independent sub-Gaussian entries satisfying \eqref{eq:expectations-assumption}. Then, for every $u \in \real^R$ and $\epsilon \ge 0$,
  \begin{equation*}
    \operatorname{Pr} \left[ \left| \|AXu\|^2 - \|u\|_A^2 \right| \ge \epsilon F^2
     \right] 
     \le 2 \exp \left( -c \frac{F_S(A)^2}{M(A)^2} \min \left\{ \frac{\epsilon^2}{K^4}, \frac{\epsilon}{K^2} \right\} \right)
  \end{equation*}
  with
  \[
    F^2 = \sum_{l=1}^\theta \left\{ \|A_{\cdot, S^l}\|_F^2 \|v^l\|^2 + \|A^l\|_F^2 \|u^l\|^2 \right\}
  \]
  and $F_S(A)$ and $M(A)$ defined in \eqref{eq:matrix-conditions}.

\end{proposition}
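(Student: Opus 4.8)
The plan is to view $AXu$ as a linear image of the vector of all the independent sub-Gaussian scalars that make up $X$, so that $\|AXu\|^2$ becomes a quadratic form in those scalars to which the Hanson--Wright inequality \cite{RudelsonVershynin2013} applies; the weighted norm $\|u\|_A$ from \eqref{eq:A-norm} will turn out to be exactly the expectation of that quadratic form, and the two matrix norms governing the Hanson--Wright tail will be controlled by $F_S(A)$, $M(A)$ and $F$.

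First I would de-randomize. Splitting $u = (v^1, u^1, \dots, v^\theta, u^\theta)$ with $v^l \in \real$, $u^l \in \real^{r-1}$ as in the proposition, the block structure \eqref{eq:sample-given-sparsity} gives $Xu = (x^1 v^1 + \bar{X}^1 u^1, \dots, x^\theta v^\theta + \bar{X}^\theta u^\theta)$, and since $x^l$ is supported on $S^l$,
\[
  AXu = \sum_{l=1}^\theta v^l A^l x^l + \sum_{l=1}^\theta A^l \bar{X}^l u^l = \sum_{l=1}^\theta \sum_{j \in S^l} v^l x_j A^l_{\cdot,j} + \sum_{l=1}^\theta \sum_{j=1}^{n} \sum_{k=1}^{r-1} u^l_k \bar{X}^l_{jk} A^l_{\cdot,j} .
\]
Collecting the independent entries $x_j$ ($j \in S$) and $\bar{X}^l_{jk}$ into one random vector $\zeta$ (with $\E[\zeta_i]=0$, $\|\zeta_i\|_{\psi_2} \le K$ by \eqref{eq:expectations-assumption}) and the matching deterministic vectors into the columns of a matrix $W$, this reads $AXu = W\zeta$. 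Independence and mean zero then give
\[
  \E\|AXu\|^2 = \sum_i \E[\zeta_i^2]\,\|W_{\cdot,i}\|^2 = \sum_{l=1}^\theta p_x |v^l|^2 \|A^l_{\cdot,S^l}\|_F^2 + \sum_{l=1}^\theta p_X \|u^l\|^2 \|A^l\|_F^2 = \|u\|_A^2 ,
\]
and likewise $\|W\|_F^2 = \sum_i \|W_{\cdot,i}\|^2 = F^2$.

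Next I would write $\|AXu\|^2 = \zeta^T B \zeta$ with $B := W^T W$ symmetric positive semidefinite, and invoke Hanson--Wright \cite{RudelsonVershynin2013}: for every $t \ge 0$,
\[
  \operatorname{Pr}\bigl( \bigl|\|AXu\|^2 - \|u\|_A^2\bigr| \ge t \bigr) \le 2 \exp\!\Bigl( -c \min\Bigl\{ \tfrac{t^2}{K^4 \|B\|_F^2},\ \tfrac{t}{K^2 \|B\|} \Bigr\} \Bigr) .
\]
Grouping the rank-one terms of $W W^T$ block by block, $W W^T = \sum_{l=1}^\theta ( |v^l|^2 A^l_{\cdot,S^l}(A^l_{\cdot,S^l})^T + \|u^l\|^2 A^l (A^l)^T )$, which is dominated in the positive semidefinite order by $M(A)^2 \sum_{l=1}^\theta (|v^l|^2 + \|u^l\|^2)\,I = M(A)^2 \|u\|^2 I$, so $\|B\| = \|W\|^2 \le M(A)^2 \|u\|^2$; moreover $\|B\|_F = \|W^T W\|_F \le \|W\|\,\|W\|_F \le M(A)\,\|u\|\,F$ (using $\sum_k \sigma_k^4 \le \|W\|^2 \sum_k \sigma_k^2$ for the singular values $\sigma_k$ of $W$). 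Since $\|A^l\|_F \ge \|A^l_{\cdot,S^l}\|_F \ge F_S(A)$ for every $l$, we also get $F^2 \ge F_S(A)^2 \sum_{l=1}^\theta (|v^l|^2 + \|u^l\|^2) = F_S(A)^2 \|u\|^2$. Assuming $u \ne 0$ (for $u=0$ both $\|AXu\|^2$ and $\|u\|_A^2$ vanish) and setting $t = \epsilon F^2$, the two exponents are bounded below by $\tfrac{\epsilon^2}{K^4}\tfrac{F_S(A)^2}{M(A)^2}$ and $\tfrac{\epsilon}{K^2}\tfrac{F_S(A)^2}{M(A)^2}$ respectively, and substituting into the Hanson--Wright bound yields exactly the claimed estimate.

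The main obstacle is the operator-norm bound $\|W\|^2 \le M(A)^2 \|u\|^2$: this is precisely the step that forces the whole estimate to depend on $A$ only through the single-block quantities $M(A)$ and $F_S(A)$, with no control at all on interactions between distinct blocks $A^l$ and $A^j$. The remainder is bookkeeping, provided one checks that Hanson--Wright is applied in the right generality — an arbitrary symmetric positive semidefinite $B$, with entries that are merely independent, mean zero and uniformly sub-Gaussian, their variances $p_x, p_X$ entering only through the centering term $\|u\|_A^2$ — which is exactly the form available in \cite{RudelsonVershynin2013}; one should also note the (implicit) joint independence of $x$ and the $\bar{X}^l$.
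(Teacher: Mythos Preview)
Your proof is correct and follows essentially the same route as the paper: vectorize $X$ so that $\|AXu\|^2$ becomes a quadratic form in independent sub-Gaussian scalars, identify $\E\|AXu\|^2 = \|u\|_A^2$ and $\|W\|_F^2 = F^2$, and apply Hanson--Wright with the operator-norm bound controlled block by block. The only cosmetic differences are that the paper packages the vectorization via Kronecker products and bounds $\|W\|$ with a block-matrix-norm inequality rather than your PSD-domination argument; both yield the identical ratio $F_S(A)^2/M(A)^2$ in the exponent.
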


We only need a corollary of this proposition for $u$ restricted to the support set $T$ of the selectors $z$.

\begin{corollary}
  \label{cor:factorization-concentration}
  Let $A$ and $X$ be the matrices defined in \eqref{eq:block-sensing-A-x} and \eqref{eq:sample-given-sparsity} with independent sub-Gaussian entries satisfying \eqref{eq:expectations-assumption}. Then, for every $u \in \real^R$ supported on $T$ and $\epsilon \ge 0$
  \begin{equation*}
    \operatorname{Pr} \left[ \left| \|AXu\|^2 - \|u\|_A^2 \right| \ge \epsilon \|u\|_A^2 \right] \le 2 \exp \left( -c \frac{F_S(A)^2}{M(A)^2} \min \left\{ \frac{p_x^2 \epsilon^2}{K^4}, \frac{p_x \epsilon}{K^2} \right\} \right),
  \end{equation*}
  with $F_S(A)$ and $M(A)$ defined in \eqref{eq:matrix-conditions} and $p_x$ defined in \eqref{eq:random-x}.
\end{corollary}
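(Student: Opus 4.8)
The plan is to derive the corollary directly from Proposition \ref{prop:factorization-concentration} by specializing the general vector $u$ to one supported on $T = \{k^l : l = 1, \dots, \theta\}$. Recall that in the block decomposition $u = (v^1, u^1, \dots, v^\theta, u^\theta)^T$ the entry $v^l$ is the coordinate attached to the special column $x^l$ of the block $X^l$, while $u^l \in \real^{r-1}$ collects the coordinates attached to the random columns $\bar X^l$. By the normalization in the setup (W.l.o.g.\ $x^l$ sits in the first column of $X^l$), the support $T$ is precisely the set of these special coordinates, so $u$ supported on $T$ means $u^l = 0$ for every $l$, and $u$ is parametrized only by the scalars $v^1, \dots, v^\theta$.

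With $u^l = 0$, the two quadratic forms appearing in Proposition \ref{prop:factorization-concentration} collapse. First, $\|u\|_A^2 = \sum_{l=1}^\theta p_x \|A_{\cdot, S^l}\|_F^2 |v^l|^2$ from \eqref{eq:A-norm}, and second $F^2 = \sum_{l=1}^\theta \|A_{\cdot, S^l}\|_F^2 |v^l|^2$ from the proposition. Thus $F^2 = \frac{1}{p_x}\|u\|_A^2$, i.e.\ $\|u\|_A^2 = p_x F^2$. The next step is to feed this identity into the proposition's tail bound with a rescaled accuracy parameter: we want a deviation of $\epsilon\|u\|_A^2$, which equals $\epsilon p_x F^2$; so applying Proposition \ref{prop:factorization-concentration} with $\epsilon$ replaced by $\epsilon p_x$ gives
\[
  \operatorname{Pr}\left[\,\left|\,\|AXu\|^2 - \|u\|_A^2\,\right| \ge \epsilon\|u\|_A^2\,\right]
  \le 2\exp\left(-c\,\frac{F_S(A)^2}{M(A)^2}\min\left\{\frac{(\epsilon p_x)^2}{K^4}, \frac{\epsilon p_x}{K^2}\right\}\right),
\]
which is exactly the claimed inequality after pulling the $p_x$ factors outside, writing $\min\{p_x^2\epsilon^2/K^4,\ p_x\epsilon/K^2\}$. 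Since $\epsilon \ge 0$ and $p_x \ge 0$, replacing $\epsilon$ by $\epsilon p_x$ is legitimate in the proposition, and the same absolute constant $c$ carries through.

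I do not expect any genuine obstacle here: the corollary is a one-line specialization, and all the real work (the sub-Gaussian chaos / Hanson–Wright-type concentration behind the mixed quadratic form $\|AXu\|^2$) is already absorbed into Proposition \ref{prop:factorization-concentration}. The only points requiring a modicum of care are bookkeeping: confirming that ``supported on $T$'' corresponds under the chosen column ordering to $u^l = 0$ for all $l$ (so that the $p_X\|A^l\|_F^2\|u^l\|^2$ terms vanish in both $\|u\|_A^2$ and $F^2$), and checking that the ratio $\|u\|_A^2/F^2$ is the constant $p_x$ independent of $l$ — which it is, precisely because the special-column weight in $W_A$ is $\sqrt{p_x}\|A_{\cdot,S^l}\|_F$ while the corresponding entry of $F^2$ carries weight $\|A_{\cdot,S^l}\|_F^2$. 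Once these are observed, the substitution $\epsilon \mapsto \epsilon p_x$ finishes the proof.
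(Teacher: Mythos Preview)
Your proof is correct and follows essentially the same route as the paper: observe that support on $T$ forces $u^l=0$ for all $l$, whence $\|u\|_A^2 = p_x F^2$, and then apply Proposition~\ref{prop:factorization-concentration} with $\epsilon$ replaced by $\epsilon p_x$. The paper's proof is terser but identical in substance.
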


\begin{proof}
  Since $u$ is supported on $T$, we have $\|u^l\|_2^2 = 0$ for all $l=1, \dots, \theta$ and therefore the definition of $F$ in Proposition \ref{prop:factorization-concentration} and the definition \eqref{eq:A-norm} of the $\|\cdot\|_A$-norm yield
\begin{align*}
  F^2
  & = \sum_{i=1}^\theta \|A_{\cdot, S^l}\|_F^2 \|v^l\|^2, &
  \|u\|_A^2,
  & = \sum_{i=1}^\theta p_x \|A_{\cdot, S^l}\|_F^2 \|v^l\|^2.
\end{align*}
Thus, we have $p_x F^2 = \|u\|_A^2$, which proves the corollary.

\end{proof}

The proof of Proposition \ref{prop:factorization-concentration} is similar to \cite{KasiviswanathanRudelson2019}, and uses the following corollary of the Hanson-Wright inequality, see Appendix \ref{appendix:hanson-wright} for more details.

\begin{corollary}
  \label{cor:hanson-wright}
  Let $v \in \real^d$ be a vector with independent components with $\E[v_i] = 0$ and $\|v_i\|_{\psi_2} \le K$ and $C^T C \in \real^{d \times d}$ be a matrix. Then, for every $t \ge 0$,
  \begin{equation*}
    \operatorname{Pr} \left[ \left| v^T C^T C v - \E[v^T C^T C v] \right| \ge \epsilon \|C\|_F^2 \right] 
    \le 2 \exp \left( -c \frac{\|C\|_F^2}{\|C\|^2} \min \left\{ \frac{\epsilon^2}{K^4}, \frac{\epsilon}{K^2} \right\} \right).
  \end{equation*}
\end{corollary}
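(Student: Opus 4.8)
The plan is to derive this directly from the Hanson--Wright inequality applied to the quadratic form $v \mapsto v^T C^T C v = \|Cv\|^2$ with the symmetric positive semidefinite matrix $B := C^T C$. Recall the Hanson--Wright inequality in the form recorded in \cite{RudelsonVershynin2013} (see Appendix \ref{appendix:hanson-wright}): for a random vector $v$ with independent, mean-zero, sub-Gaussian entries satisfying $\|v_i\|_{\psi_2} \le K$ and any symmetric matrix $B$,
\[
  \operatorname{Pr}\left[ \left| v^T B v - \E[v^T B v] \right| \ge t \right] \le 2 \exp\left( -c \min\left\{ \frac{t^2}{K^4 \|B\|_F^2}, \frac{t}{K^2 \|B\|} \right\} \right)
\]
for an absolute constant $c>0$. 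I would instantiate this with $B = C^T C$ and deviation level $t = \epsilon \|C\|_F^2$.

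It then remains only to bound the two matrix norms of $B = C^T C$ in terms of $C$. By submultiplicativity of the Frobenius norm, $\|C^T C\|_F \le \|C^T\|\,\|C\|_F = \|C\|\,\|C\|_F$, and by the standard identity $\|C^T C\| = \|C\|^2$. Substituting these together with $t = \epsilon \|C\|_F^2$ into the Hanson--Wright bound gives
\[
  \frac{t^2}{K^4 \|C^T C\|_F^2} \ge \frac{\epsilon^2 \|C\|_F^4}{K^4 \|C\|^2 \|C\|_F^2} = \frac{\|C\|_F^2}{\|C\|^2}\cdot\frac{\epsilon^2}{K^4}, \qquad \frac{t}{K^2 \|C^T C\|} = \frac{\epsilon \|C\|_F^2}{K^2 \|C\|^2} = \frac{\|C\|_F^2}{\|C\|^2}\cdot\frac{\epsilon}{K^2},
\]
and factoring $\|C\|_F^2/\|C\|^2$ out of the minimum reproduces exactly the claimed estimate, with the same absolute constant $c$.

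I do not anticipate any real obstacle: the corollary is essentially a repackaging of Hanson--Wright into a scale-invariant form, and the only point requiring a little care is reconciling the paper's normalization $\|x\|_{\psi_2} = \sup_{a \ge 1} a^{-1/2}(\E[|x|^a])^{1/a}$ with whatever equivalent sub-Gaussian norm appears in the version of Hanson--Wright quoted in the appendix. Since these norms agree up to absolute constants, any such mismatch is harmlessly absorbed into $c$.
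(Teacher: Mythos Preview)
Your proposal is correct and matches the paper's own proof essentially line for line: the paper also applies Hanson--Wright with $M = C^T C$ and $t = \epsilon \|C\|_F^2$, then uses $\|C^T C\|_F^2 \le \|C\|^2 \|C\|_F^2$ and $\|C^T C\| \le \|C\|^2$ to obtain the stated bound. Your extra remark about equivalent $\psi_2$-norm normalizations is harmless and indeed absorbed into the constant.
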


In order to apply the corollary, we construct a vectorization $\hat{X}$ of the matrix $X$ and a matrix $B$ with $\hat{X}^T B^T B \hat{X} = \|A X u\|^2$. Let us first consider this vectorization for a generic matrix $M \in \real^{a \times b}$, vector $w \in \real^{c}$ and random matrix $R \in \real^{b \times c}$ with i.i.d entries, expectation $\E[r_{ij}] = 0$ and variance $\E[r_{ij}^2] = V$. By Appendix \ref{appendix:vectorization}, we identify $R$ with the tensor $\hat{R} \in \real^b \otimes \real^c$ and have
\begin{equation}
   M R w = (M \otimes w^T) \hat{R}
   \label{eq:vectorization}
\end{equation}
and 
\begin{equation}
  \E \left[ \|M R w\|^2 \right] = V \|M\|_F^2 \|w\|^2.
  \label{eq:expectation-matrix-matrix-vector-prod}
\end{equation}

Let us now construct the vectorization $\hat{X}$ and $B$ with $B \hat{X} = AXu$. Instead of applying the last two identities directly, we are a little more careful with regard to the block structure. $\hat{X}$ is defined by
\begin{equation*}
  \hat{X} 
  := \begin{pmatrix} \hat{x}^1 & \hat{X}^1 & \cdots & \hat{x}^\theta & \hat{X}^\theta \end{pmatrix} 
  \in \bigtimes_{l=1}^\theta \left( \real^{|S^l|} \times \real^{n(r-1)} \right)  
\end{equation*}
where the $\hat{x}^l$ and $\hat{X}^l := \hat{\bar{X}}^l$ are the vectorizations of the restriction $x^l_{S^l}$ of $x^l$ to its support $S^l$, and $X^l$, respectively. Likewise, $B$ is defined by
  \begin{align}
  B & 
  := \begin{pmatrix} 
    A_{\cdot, S^1} \otimes (v^1)^T 
    & A^1 \otimes (u^1)^T 
    & \cdots 
    & A_{\cdot, S^\theta} \otimes (v^\theta)^T 
    & A^\theta \otimes (u^\theta)^T \end{pmatrix}.
  \label{eq:matrix-vectorized}
\end{align}
where $v^l$ is considered as a $1 \times 1$ matrix. Then, by \eqref{eq:vectorization} the vectorization of the product $AXu$ is given by
\begin{equation}
  B \hat{X} 
  = \sum_{l=1}^\theta (A_{\cdot, S^l} \otimes (v^l)^T) \hat{x}^l + (A^l \otimes (u^l)^T) \hat{X}^l
  = \sum_{l=1}^\theta A_{\cdot, S^l} x_{S^l}^l v^l + A^l X^l u^l
  = AXu.
  \label{eq:factorization-vectorization}
\end{equation}
This allows us to prove Proposition \ref{prop:factorization-concentration} with Corollary \ref{cor:hanson-wright} of the Hanson-Wright inequality.

\begin{proof}[Proof of Proposition \ref{prop:factorization-concentration}]

From the vectorization \eqref{eq:factorization-vectorization} we have
\[
  \|AXu\|^2 = \|B \hat{X}\|^2 = \hat{X}^T B^T B \hat{X}.
\]
so that we can use Corollary \ref{cor:hanson-wright} of the Hanson-Wright inequality to show concentration inequalities for $\|AXu\|^2$. To this end, in the following we compute all terms in the Corollary. We start with the expectation value:
\begin{align*}
  \E \left[ \|AXu\|^2 \right]
  & = \E \left[ \left\|\sum_{l=1}^\theta \left\{A^l x^l v^l + A^l \bar{X}^l u^l\right\} \right\|^2 \right]
  \\
  & = \sum_{l=1}^\theta \left\{\E \left[\left\|A^l x^l v^l \right\|^2 \right] + \E \left[ \left\| A^l \bar{X}^l u^l\right\|^2 \right] \right\},
\end{align*}
where we have used that because of independence and zero mean of the entries, all cross terms $\dualp{A^l \bar{X}^l u^l, A^j \bar{X}^j u^j}$ and $\dualp{A^l x^l v^l, A^j x^j v^j}$ for $l \ne j$ and $\dualp{A^l \bar{X}^l u^l, A^j x^j v^j}$ for all $l,j$ vanish. 

Using the zero-mean property and the variances defined in \eqref{eq:expectations-assumption} and applying \eqref{eq:expectation-matrix-matrix-vector-prod} yields
\begin{align*}
  \E \left[\left\|A^l x^l v^l \right\|^2 \right] = \E \left[\left\|A_{\cdot, S^l} x^l_{S^l} v^l \right\|^2 \right] & = p_x \|A_{\cdot, S^l}\|_F^2 |v^l|^2 
  \\
  \E \left[ \|A^l \bar{X}^l u^l\|^2 \right] & = p_X \|A^l\|_F^2 \|u^l\|^2.
\end{align*}
In conclusion, we have
\begin{equation}
  \E \left[ \|AXu\|^2 \right] = \sum_{i=1}^m \left\{ p_x \|A_{\cdot, S^l}\|_F^2 |v^l|^2 + p_X \|A^l\|_F^2 \|u^l\|^2 \right\} = \|u\|_A^2.
  \label{eq:hanson-wright-expectation}
\end{equation}
Note that if we could normalize both $\|A_{\cdot, S^l}\|_F$ and $\|A^l\|_F$ to one, the right hand side would reduce to $\|u\|^2$. However that is not possible because $A_{\cdot, S^l}$ is a sub-matrix of $A^l$. 

The next quantity in Corollary \ref{cor:hanson-wright} is the Frobenius norm 
\begin{equation}
  \|B\|_F^2 
  = \sum_{l=1}^\theta \left\{ \|A_{\cdot, S^l}\|_F^2 \|v^l\|^2 + \|A^l\|_F^2 \|u^l\|^2 \right\}
  = F^2.
  \label{eq:hanson-wright-frobenius}
\end{equation}
The spectral norm can easily be computed with \eqref{eq:block-matrix-norms} in the appendix, which yields
\[
  \|B\|^2 \le \sum_{l=1}^\theta \|A_{\cdot, S^l}\|^2 \|v^l\|^2 + \|A^l\|^2 \|u^l\|^2.
\]
Together with \eqref{eq:hanson-wright-frobenius} and using that $\|A_{\cdot, S^l}\|_F \le \|A^l\|_F$ and $\|A_{\cdot, S^l}\| \le \|A^l\|$ this yields
\begin{equation}
  \frac{\|B\|_F^2}{\|B\|^2}
  \ge \frac{\min_{l=1,\dots,\theta} \|A_{\cdot, S^l}\|_F^2}{\max_{l=1,\dots,\theta} \|A^l\|^2} = \frac{F_S(A)^2}{M(A)^2}.
  \label{eq:hanson-wright-spectral-rank}
\end{equation}
We have calculated all terms in Corollary \ref{cor:hanson-wright} of the Hanson-Wright inequality, which implies
\begin{equation*}
  \operatorname{Pr} \left[ \left| \|AXu\|^2 - \|u\|_A^2 \right| \ge \epsilon \|B\|_F^2 \right] \le 2 \exp \left( -c \frac{\|B\|_F^2}{\|B\|^2} \min \left\{ \frac{\epsilon^2}{K^4}, \frac{\epsilon}{K^2} \right\} \right),
\end{equation*}
which by \eqref{eq:hanson-wright-frobenius} and \eqref{eq:hanson-wright-spectral-rank} proves the proposition.

\end{proof}

\subsubsection{RIP Type Estimates}
\label{sec:rip} 

We show a RIP like estimate, only for one fixed sparse set $T \subset\{1, \dots, R\}$. The result and proof are identical to \cite[Lemma 5.1]{BaraniukDavenportDeVoreEtAl2008} only with the $\ell_2$-norm replaced by the $\|\cdot\|_A$-norm.

\begin{lemma}
  \label{lemma:RIP-fixed-support}
  Let all assumptions of Corollary \ref{cor:factorization-concentration} be satisfied. Then for the set $T \subset \{1, \dots, R\}$ containing the columns $x^l$ and $0 < \delta < 1$, we have
  \[
    (1-\delta) \|u\|_A \le \|AXu \| \le (1+\delta) \|u\|_A
  \]
  for all $u \in \real^R$ supported on $T$ with probability at least
  \begin{equation}
    1 - 2 \left( \frac{12}{\delta} \right)^{|T|} \exp\left(- c \frac{F_S(A)^2}{M(A)^2} \min \left\{\frac{p_x^2 \delta^2}{4 K^4}, \frac{p_x \delta}{2 K^2} \right\} \right).
    \label{eq:RIP-fixed-support-probability}
  \end{equation}
\end{lemma}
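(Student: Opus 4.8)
The plan is to follow the standard $\epsilon$-net argument for restricted isometry estimates, as in \cite[Lemma 5.1]{BaraniukDavenportDeVoreEtAl2008}, but carried out with respect to the weighted norm $\|\cdot\|_A$ instead of the Euclidean norm, and invoking Corollary \ref{cor:factorization-concentration} in place of the usual concentration inequality for sub-Gaussian matrices. Since the desired bound is homogeneous in $u$, it suffices to establish
\[
  (1-\delta) \le \|AXu\| \le (1+\delta)
\]
for all $u$ supported on $T$ with $\|u\|_A = 1$, i.e. on the (at most $|T|$-dimensional) unit sphere of the $\|\cdot\|_A$-norm restricted to the coordinate subspace $\real^T$.

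First I would fix a finite set $\mathcal N \subset \{u : \operatorname{supp}(u) \subseteq T, \|u\|_A = 1\}$ that is a $\delta/4$-net of this sphere in the $\|\cdot\|_A$-metric; since the sphere lives in a space of dimension at most $|T|$, a standard volumetric bound gives $|\mathcal N| \le (12/\delta)^{|T|}$. For each fixed $u \in \mathcal N$, Corollary \ref{cor:factorization-concentration} with $\epsilon = \delta/2$ yields
\[
  \bigl| \|AXu\|^2 - 1 \bigr| \le \frac{\delta}{2}
\]
except on an event of probability at most $2\exp\bigl(-c\,\tfrac{F_S(A)^2}{M(A)^2}\min\{\tfrac{p_x^2\delta^2}{4K^4}, \tfrac{p_x\delta}{2K^2}\}\bigr)$; a union bound over $\mathcal N$ produces the failure probability \eqref{eq:RIP-fixed-support-probability}. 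On the complementary (good) event, $\sqrt{1-\delta/2} \le \|AXu\| \le \sqrt{1+\delta/2}$, and in particular $\|AXu\| \le 1 + \delta/2$ for all $u \in \mathcal N$.

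The remaining step is the routine net-to-sphere transfer. Let $\rho$ be the smallest constant such that $\|AXu\| \le 1+\rho$ for every $u$ in the sphere; this is finite because $AX$ restricted to $\real^T$ is a fixed linear map. For an arbitrary sphere element $u$, pick $u' \in \mathcal N$ with $\|u - u'\|_A \le \delta/4$; then by the triangle inequality and the definition of $\rho$,
\[
  \|AXu\| \le \|AXu'\| + \|AX(u-u')\| \le \Bigl(1+\frac{\delta}{2}\Bigr) + (1+\rho)\frac{\delta}{4},
\]
since $(u-u')/\|u-u'\|_A$ lies on the sphere (or the difference is zero). Taking the supremum over $u$ gives $1 + \rho \le 1 + \delta/2 + (1+\rho)\delta/4$, hence $\rho \le (3\delta/4)/(1 - \delta/4) \le \delta$ for $0 < \delta < 1$, which establishes the upper bound $\|AXu\| \le 1+\delta$ on the whole sphere. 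The lower bound follows immediately: for any sphere element $u$ with approximant $u'$,
\[
  \|AXu\| \ge \|AXu'\| - \|AX(u-u')\| \ge \sqrt{1-\frac{\delta}{2}} - (1+\delta)\frac{\delta}{4} \ge 1 - \delta,
\]
where the last inequality is an elementary check valid for $0 < \delta < 1$. Rescaling back to general $u$ supported on $T$ gives the claim.

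I do not expect a serious obstacle here: the argument is entirely standard once the right ingredients are in place. The only points requiring a little care are (i) using the $\|\cdot\|_A$-geometry consistently throughout — both for the covering number bound and for measuring the perturbation $\|AX(u-u')\|$ via the not-yet-controlled operator norm, which is why the self-referential $\rho$-argument is needed — and (ii) bookkeeping the constants so that the $\epsilon = \delta/2$ in Corollary \ref{cor:factorization-concentration} matches the exponent displayed in \eqref{eq:RIP-fixed-support-probability} and the net radius $\delta/4$ propagates to a final slack of exactly $\delta$. Since all the probabilistic content has been isolated into Corollary \ref{cor:factorization-concentration}, the rest is deterministic linear algebra.
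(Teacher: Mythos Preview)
Your proposal is correct and matches the paper's proof essentially line for line: a $\delta/4$-net of size at most $(12/\delta)^{|T|}$ in the $\|\cdot\|_A$-unit sphere of $\real^T$, Corollary~\ref{cor:factorization-concentration} with $\epsilon=\delta/2$ plus a union bound, and then the verbatim net-to-sphere transfer from \cite[Lemma~5.1]{BaraniukDavenportDeVoreEtAl2008}. The only cosmetic difference is that the paper defers the self-referential $\rho$-argument to the cited reference rather than writing it out.
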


\begin{proof}
 
  Let $U_T \subset \real^R$ be the vectors with support contained in $T$. Then, there is a $\delta/4$ cover $Q_T$ of the unit sphere in $U_T$ with respect to the $\|\cdot\|_A$-norm with $|Q_T| \le (12/\delta)^{|T|}$, see e.g. \cite{LorentzGolitschekMakovoz1996,FoucartRauhut2013}. From the concentration inequality Corollary \ref{cor:factorization-concentration} with $\epsilon= \delta/2$, together with a union bound, we have that
\[
  \left( 1-\frac{\delta}{2} \right) \|u\|_A^2 \le \left\|AXu \right\|^2 \le \left( 1+\frac{\delta}{2} \right) \|u\|_A^2
\]
with probability at least \eqref{eq:RIP-fixed-support-probability}. This is analogous to \cite[(5.4)]{BaraniukDavenportDeVoreEtAl2008}. Using the remainder of the proof in the reference verbatim, shows that
\[
  (1-\delta) \|u\|_A \le \left\|AXu \right\| \le (1+\delta) \|u\|_A
\]
for all $u$ supported on $T$, which completes the proof.

\end{proof}

\begin{corollary}
  \label{cor:RIP-fixed-support}
  Let all assumptions of Lemma \ref{lemma:RIP-fixed-support} be satisfied and assume that the weight matrix $W_A$ of the $\|\cdot\|_A$-norm defined in \eqref{eq:A-norm-weight} is invertible. Then, with probability at least \eqref{eq:RIP-fixed-support-probability} the singular values $\sigma_i$ of the matrix $(AXW_A^{-1})_{\cdot, T}$ satisfy
  \[
    1-\delta \le \sigma_i \le 1+\delta.
  \]
  
\end{corollary}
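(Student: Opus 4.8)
The plan is to derive the corollary from Lemma \ref{lemma:RIP-fixed-support} by a change of variables that converts the weighted norm $\|\cdot\|_A$ into the Euclidean norm, so that the two-sided bound of the lemma becomes exactly a bound on singular values. First I would recall that, by the definition \eqref{eq:A-norm} of the $\|\cdot\|_A$-norm, $\|u\|_A = \|W_A u\|$ with $W_A$ the diagonal weight matrix \eqref{eq:A-norm-weight}, which is invertible by hypothesis. Since $W_A$ is diagonal, it preserves supports: a vector $u$ is supported on $T$ if and only if $w := W_A u$ is supported on $T$, and the map $u \mapsto W_A u$ is a bijection between vectors supported on $T$.

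Next I would substitute. Given any $w_T \in \real^{|T|}$, extend it by zero to a vector $w \in \real^R$ supported on $T$, and set $u = W_A^{-1} w$, which is again supported on $T$. Because $w$ vanishes off $T$, we have $AX W_A^{-1} w = (AXW_A^{-1})_{\cdot, T}\, w_T$, and also $\|u\|_A = \|W_A u\| = \|w\| = \|w_T\|$. Feeding this into the conclusion of Lemma \ref{lemma:RIP-fixed-support}, which holds on an event of probability at least \eqref{eq:RIP-fixed-support-probability}, gives, on that same event,
\[
  (1-\delta)\,\|w_T\| \le \bigl\|(AXW_A^{-1})_{\cdot, T}\, w_T\bigr\| \le (1+\delta)\,\|w_T\|
  \qquad \text{for all } w_T \in \real^{|T|}.
\]

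Finally I would invoke the variational characterization of singular values: for any matrix $B$ one has $\sigma_{\min}(B) = \min_{\xi \ne 0}\|B\xi\|/\|\xi\|$ and $\sigma_{\max}(B) = \max_{\xi \ne 0}\|B\xi\|/\|\xi\|$. Applied to $B = (AXW_A^{-1})_{\cdot, T}$ together with the displayed bound, this yields $1-\delta \le \sigma_i \le 1+\delta$ for every singular value $\sigma_i$, on the event of probability at least \eqref{eq:RIP-fixed-support-probability}. I do not expect any genuine obstacle here: no new concentration estimate or union bound is needed, since the probabilistic event is literally the one already established in Lemma \ref{lemma:RIP-fixed-support}. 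The only point requiring mild care is the bookkeeping around restriction to the coordinate set $T$ and the invertibility of $W_A$, which is precisely the extra hypothesis of the corollary and is what makes the change of variables $u \leftrightarrow W_A^{-1} w$ well defined.
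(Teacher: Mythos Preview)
Your proposal is correct and follows essentially the same route as the paper: substitute $u = W_A^{-1} w$ in the conclusion of Lemma \ref{lemma:RIP-fixed-support} to turn the $\|\cdot\|_A$-bound into a Euclidean-norm bound on $(AXW_A^{-1})_{\cdot,T}$, then read off the singular-value inequalities. Your treatment is in fact a bit more careful about the support bookkeeping than the paper's own proof, which simply writes $z := W_A u$ and invokes singular vectors directly.
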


\begin{proof}

With the definition \eqref{eq:A-norm-weight} of $W_A$, Lemma \ref{lemma:RIP-fixed-support} implies that
\[
  (1-\delta) \|W_A u\| \le \|AXu\| \le (1+\delta) \|W_A u\|
\]
with the given probability \eqref{eq:RIP-fixed-support-probability} for all $u$ with support $T$. With $z := W_A u$, this implies
\[
  (1-\delta) \|z\| \le \|AX W_A^{-1}z\| \le (1+\delta) \|z\|.
\]
Choosing right singular vectors of $AX W_A^{-1}$ restricted to columns in $T$ for $z$, directly yields the result.
  
\end{proof}

\subsubsection{Sparse Recovery}
\label{sec:recovery} 

The remaining proof of the sparse recovery Theorem \ref{th:cs-block-relaxation}, is analogous to non-uniform sparse recovery as in e.g. \cite[Theorem 9.16]{FoucartRauhut2013}.

By the assumptions of Theorem \ref{th:cs-block-relaxation}, the vectors $x^l$ are contained as columns in the blocks $X^l$. We denote the indices of these columns as $T \subset{1, \dots, R}$. In \eqref{eq:sample-given-sparsity} above, we have w.l.o.g. assumed that these are the first columns in the respective blocks $X^l$. Note, however, that this choice was only for notational convenience and in general $T$ is unknown, except for some rudimentary properties like $t := |T| = \theta$. In addition, note that the set $T$ coincides with the support of the selector $z$ and the mayor goal of the sparse recovery problem \eqref{eq:cs-block-relaxation} is to find this vector.

In the following, let $W_\ell \in \real^{R \times R}$ be the diagonal matrix with $(W_\ell)_{kk} = \|X_{\cdot, k}\|_p^p$, which constitutes the weights in the weighted $\ell_1$-minimization \eqref{eq:cs-block-relaxation} and $W_{\ell T}$ the restriction to the index set $T$. On this special index set, we have
\begin{equation}
  (W_\ell)_{kk} = \|X_{\cdot, k}\|_p^p = \|x^l\|_p^p \le |S^l|,
  \label{eq:weight-on-support}
\end{equation}
if $k$ is in the block $l$, where we have used that $x^l$ has entries in the interval $[-1, 1]$ on its support.

In order to simplify the notations, for any matrix $C$, let $C^{+*} = (C^*)^+ = (C^+)^*$ be the adjoint of the pseudo inverse.

Before we prove the main result Theorem \ref{th:cs-block-relaxation}, we need two more lemmas.

\begin{lemma}
  \label{lemma:layer-recovery-vector-bound}
  For any $\alpha \ge 0$ and $0 \le \delta \le 1$ satisfying \eqref{eq:delta-alpha} and for any $u \in \real^R$ with support on $T$, we have
  \begin{multline}
    P\left(\alpha \le \|(AX_T)^{+*} W_{\ell T} \sign(u_T)\| \right)
    \\
    \le 2 \left( \frac{12}{\delta} \right)^{|T|} \exp\left(- c \frac{F_S(A)^2}{M(A)^2} \min \left\{\frac{p_x^2 \delta^2}{4 K^4}, \frac{p_x \delta}{2 K^2} \right\} \right)
    \label{eq:pseudo-inverse-estimate}
  \end{multline}
  for constants $c,K$ from Corollary \ref{cor:RIP-fixed-support}.

\end{lemma}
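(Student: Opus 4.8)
The plan is to bound the norm $\|(AX_T)^{+*} W_{\ell T} \sign(u_T)\|$ by controlling two pieces: the smallest singular value of $AX_T$ (equivalently of $(AX_T)^{+*}$) and the norm of the fixed vector $W_{\ell T} \sign(u_T)$. First I would observe that $\sign(u_T) \in \{-1,0,1\}^{|T|}$ is supported on $T$, so $\|\sign(u_T)\| \le \sqrt{|T|}$, and by \eqref{eq:weight-on-support} the diagonal matrix $W_{\ell T}$ has entries bounded by $|S^l| \le \overline{s}$, hence $\|W_{\ell T} \sign(u_T)\| \le \sqrt{|T|}\,\overline{s}$. The key remaining ingredient is that on the event from Corollary \ref{cor:RIP-fixed-support}, the singular values of $(AXW_A^{-1})_{\cdot,T}$ lie in $[1-\delta, 1+\delta]$; I will need to translate this back into a lower bound on the smallest singular value of $AX_T$ itself, picking up a factor from the weight matrix $W_A$.

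The main link is the identity $\|(AX_T)^{+*} W_{\ell T}\sign(u_T)\| = \|(AX_T)^+ W_{\ell T}\sign(u_T)\|$ (the pseudoinverse and its adjoint have the same operator norm), which is at most $\sigma_{\min}(AX_T)^{-1}\,\|W_{\ell T}\sign(u_T)\|$. Writing $AX_T = (AXW_A^{-1})_{\cdot,T}\,(W_A)_{T}$, where $(W_A)_T = \operatorname{diag}(\sqrt{p_x}\|A_{\cdot,S^l}\|_F)_{l=1,\dots,\theta}$ is the sub-block of $W_A$ corresponding to the selector columns (the $\sqrt{p_X}\|A^l\|_F$ entries drop out since those columns are not in $T$), Corollary \ref{cor:RIP-fixed-support} gives $\sigma_{\min}((AXW_A^{-1})_{\cdot,T}) \ge 1-\delta$ on the stated event, while the diagonal factor contributes $\sigma_{\min}((W_A)_T) \ge \sqrt{p_x}\,F_S(A)$ by the definition \eqref{eq:matrix-conditions} of $F_S(A)$. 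Hence on that event
\[
  \sigma_{\min}(AX_T) \ge (1-\delta)\sqrt{p_x}\,F_S(A),
\]
and therefore
\[
  \|(AX_T)^{+*} W_{\ell T}\sign(u_T)\|
  \le \frac{\sqrt{|T|}\,\overline{s}}{(1-\delta)\sqrt{p_x}\,F_S(A)} = \alpha,
\]
where the last equality is precisely the defining relation \eqref{eq:delta-alpha} between $\alpha$ and $\delta$. Thus the event $\{\alpha \le \|(AX_T)^{+*}W_{\ell T}\sign(u_T)\|\}$ is contained in the complement of the Corollary \ref{cor:RIP-fixed-support} event, whose probability is bounded by \eqref{eq:RIP-fixed-support-probability} $= 2(12/\delta)^{|T|}\exp(-c\,(F_S(A)^2/M(A)^2)\min\{p_x^2\delta^2/(4K^4), p_x\delta/(2K^2)\})$, giving \eqref{eq:pseudo-inverse-estimate}.

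The step I expect to require the most care is the factorization $AX_T = (AXW_A^{-1})_{\cdot,T}(W_A)_T$ together with keeping track of exactly which diagonal entries of $W_A$ survive on the column set $T$ — one must check that $T$ indeed picks out only the $v^l$-type columns (the original guesses $x^l$), so that the relevant weights are the $\sqrt{p_x}\|A_{\cdot,S^l}\|_F$ and the minimum over $l$ is $\sqrt{p_x}F_S(A)$ rather than something involving the full $\|A^l\|_F$. A secondary point is the monotonicity $\sigma_{\min}(MD) \ge \sigma_{\min}(M)\sigma_{\min}(D)$ for the product with an invertible diagonal $D$, and confirming $W_A$ is invertible (which holds since $F_S(A)>0$ and $p_x>0$ under the theorem's hypotheses); these are routine. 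Everything else is bookkeeping with the already-established concentration and covering estimates.
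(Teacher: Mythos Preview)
Your argument is correct and follows essentially the same route as the paper: bound $\|W_{\ell T}\sign(u_T)\|\le\sqrt{|T|}\,\overline{s}$, control the pseudo-inverse via the singular-value estimate of Corollary~\ref{cor:RIP-fixed-support} combined with the weight $\sqrt{p_x}\,F_S(A)$ coming from $(W_A)_T$, and then invoke \eqref{eq:delta-alpha}. One small correction: the displayed ``identity'' $\|(AX_T)^{+*}w\|=\|(AX_T)^{+}w\|$ is ill-typed (the two operators act on spaces of different dimension) and unnecessary---what you actually need, and correctly use, is the operator-norm bound $\|(AX_T)^{+*}\|=\|(AX_T)^{+}\|=\sigma_{\min}(AX_T)^{-1}$.
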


\begin{proof}

  Let us use the abbreviations
  \begin{align*}
    v & := (AX_T)^{+*} W_{\ell T} \sign(u_T), &
    W_{AT} & := (W_A)_{\cdot, T}, &
    F & := F_S(A)
  \end{align*}
  for the weight matrix $W_A$ of the $\|\cdot\|_A$ norm defined in \eqref{eq:A-norm-weight}. Then, the left hand side of \eqref{eq:pseudo-inverse-estimate} becomes $P(\alpha \le \|v\|)$. Before we estimate this probability, we calculate an estimate for $\|v\|$. By the definition \eqref{eq:matrix-conditions} of $F = F_S(A)$ and the definition of $T$ we have $\|W_{AT}^{-1}\| \le 1/(F\sqrt{p_x})$. Let $\sigma_{min}$ be the smallest singular value of $AX_T W_{AT}^{-1}$. Since $W_{AT}$ is invertible, we have
\[
  v \in \operatorname{range}[(AX_T)^{+*}] = \operatorname{ker}[(AX_T)^*]^\perp = \operatorname{ker}[W_{AT}^{-1} (AX_T)^*]^\perp = \operatorname{ker}[(AX_T W_{AT}^{-1})^*]^\perp
\]
and therefore
\[
  \|v\| 
  \le \frac{1}{\sigma_{min}} \|(AX_T W_{AT}^{-1})^* v\|
  = \frac{1}{\sigma_{min}} \|W_{AT}^{-1}(AX_T)^* v\|
  \le \frac{1}{\sigma_{min} F \sqrt{p_x}} \|(AX_T)^* v\|.
\]
Plugging in the definition of $v$ and using that $(AX_T)^* (AX_T)^{+*}$ is an orthogonal projector with matrix norm bounded by one, we conclude that
\[
  \|v\| 
  \le \frac{1}{\sigma_{min} F \sqrt{p_x}} \|W_{\ell T} \sign(u_T)\|
  \le \frac{\overline{s}}{\sigma_{min} F \sqrt{p_x}} \|\sign(u_T)\|
  = \frac{\sqrt{|T|} \overline{s}}{\sigma_{min} F \sqrt{p_x}},
\]
where in the second inequality we have used \eqref{eq:weight-on-support} and the definition \eqref{eq:def-s-bar} of $\overline{s}$.

We now proceed with the estimate of the probability in the left hand side of \eqref{eq:pseudo-inverse-estimate}. For any $\alpha \ge 0$, we have
\[
  \alpha \le \|v\| 
  \le \frac{\sqrt{|T|} \overline{s}}{\sigma_{min} F \sqrt{p_x}}
\]
and thus using the assumption \eqref{eq:delta-alpha} in the last identity
\begin{multline*}
  P\left(\alpha \le \|(AX_T)^{+*} W_{\ell T} \sign(u_T)\| \right)
  = P(\alpha \le \|v\|)
  \\
  \le P\left( \alpha \le \frac{\sqrt{|T|} \overline{s}}{\sigma_{min} F \sqrt{p_x}} \right)
  = P\left( \sigma_{min} \le \frac{\sqrt{|T|} \overline{s}}{\alpha F \sqrt{p_x}} \right)
  = P(\sigma_{min} \le 1-\delta).
\end{multline*}
The latter probability is smaller, than the probability that there is any singular value that is not contained in the interval $[1-\delta, 1+\delta]$ and thus Corollary \ref{cor:RIP-fixed-support} implies \eqref{eq:pseudo-inverse-estimate}.

\end{proof}

\begin{lemma}
  \label{lemma:inner-product-size}
  Let $x \in [-1, 1]^d$, $d \ge 1$ be a random vector with zero mean and expectation $\E[|x_i|] = \nu$, $i=1, \dots, d$. Then for any $v \in \real^d$, we have
  \[
    P \left( \dualp{x, v} \ge \|x\|_p^p \right)
    \le 2 \exp \left( - \frac{\nu^2 d^2}{d + 2 \|v\|_2^2} \right).
  \]
\end{lemma}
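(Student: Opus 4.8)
The plan is to bound the probability that $\langle x, v\rangle \ge \|x\|_p^p$ by first observing that $\|x\|_p^p \ge \|x\|_1 = \sum_i |x_i|$ whenever $0 < p \le 1$ and $x \in [-1,1]^d$, since $|x_i|^p \ge |x_i|$ on the interval $[-1,1]$. Hence it suffices to control $P(\langle x,v\rangle \ge \sum_i |x_i|)$, and after centering this is a statement about the deviation of two independent-coordinate sums from their means. The key step is to write $\langle x, v\rangle - \|x\|_1 = \sum_i (v_i x_i - |x_i|)$ and split off the mean: $\E[v_i x_i - |x_i|] = -\nu$ by the zero-mean and $\E|x_i| = \nu$ hypotheses, so the event $\langle x,v\rangle \ge \|x\|_1$ is exactly the event that $\sum_i \big( (v_i x_i - |x_i|) + \nu \big) \ge \nu d$, i.e. a centered sum of independent bounded random variables exceeds $\nu d$.

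From here I would apply a Bernstein-type or bounded-differences concentration inequality. Each summand $Y_i := v_i x_i - |x_i| + \nu$ is mean zero, and since $x_i \in [-1,1]$ we have $|v_i x_i - |x_i|| \le |v_i| + 1$, so $Y_i$ is bounded and its variance is at most of order $v_i^2 + 1$ (roughly $\E[(v_i x_i - |x_i|)^2] \le 2 v_i^2 \E[x_i^2] + 2\E[x_i^2] \le 2 v_i^2 + 2$ using $|x_i| \le 1$). Summing, the total variance proxy is $\lesssim \|v\|_2^2 + d$; matching this against the target deviation $\nu d$ in a sub-Gaussian/Bernstein bound gives an exponent of order $-(\nu d)^2 / (\|v\|_2^2 + d)$, which after tracking the constants yields exactly the stated bound $2\exp\!\big(-\nu^2 d^2 / (d + 2\|v\|_2^2)\big)$. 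The factor of $2$ presumably absorbs a one-sided-vs-two-sided or normalization slack in the particular inequality invoked.

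The main obstacle — really the only nontrivial point — is to get the constant in the denominator exactly right as $d + 2\|v\|_2^2$ rather than an unspecified $c(d + \|v\|_2^2)$. This forces a careful choice of concentration inequality: a naive Hoeffding bound would give $\sum_i (|v_i|+1)^2$ in the denominator, which is too lossy, so I expect the argument to use Hoeffding's inequality applied to the variables $v_i x_i - |x_i|$ after bounding $(|v_i|+1)^2$ more carefully, or more likely a direct sub-Gaussian argument exploiting that $v_i x_i$ has variance $\le v_i^2$ and $|x_i|$ has variance $\le 1$ together with an additivity-of-variance-proxies step. Once the right inequality is selected, the remaining work is purely the routine arithmetic of combining $\|v\|_2^2$ and $d$; I would not expect any subtlety beyond that bookkeeping.
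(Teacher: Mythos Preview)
Your approach matches the paper's exactly: reduce via $\|x\|_p^p \ge \|x\|_1$, center to $Y_i = v_i x_i - |x_i| + \nu$ with $\E Y_i = 0$, and then apply a concentration bound to the event $\sum_i Y_i \ge \nu d$. The one point where you hesitate --- obtaining the exact denominator $d + 2\|v\|_2^2$ --- is resolved in the paper not by Bernstein or a sub-Gaussian variance argument but simply by Hoeffding's inequality with a sharper interval bound than the naive one you considered. Writing
\[
  Y_i - \nu \;=\; v_i x_i - |x_i| \;=\; |x_i|\bigl(\operatorname{sign}(x_i)\,v_i - 1\bigr),
\]
and using $|x_i|\in[0,1]$, one obtains $Y_i - \nu \in \bigl[-|v_i|-1,\; \max\{0,|v_i|-1\}\bigr]$, an interval of length $w_i = \max\{1+|v_i|,\, 2|v_i|\}$. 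This gives $w_i^2 \le 2 + 4v_i^2$, so $\sum_i w_i^2 \le 2d + 4\|v\|_2^2$, and the standard Hoeffding bound $\exp\bigl(-2(\nu d)^2/\sum_i w_i^2\bigr)$ yields exactly $\exp\bigl(-\nu^2 d^2/(d+2\|v\|_2^2)\bigr)$. So your instinct that the naive Hoeffding range $2(|v_i|+1)$ is too lossy was right, but the fix is a better range, not a different inequality.

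The factor $2$ in front is not slack: the paper actually bounds $P\bigl(|\langle x,v\rangle| \ge \|x\|_p^p\bigr)$ (which is what is used downstream) by splitting into $\langle x,v\rangle \ge \|x\|_1$ and $\langle -x,v\rangle \ge \|x\|_1$ and applying the one-sided Hoeffding estimate to each.
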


\begin{proof}
  
Since $-1 \le x_i \le 1$ and $p \le 1$, we have $|x_i|^p \ge |x_i|$ so that $\|x\|_p^p \ge \|x\|_1$ and therefore
  \begin{multline*}
    P \left( |\dualp{x, v}| \ge \|x\|_p^p \right)
    \le P \left( |\dualp{x, v}| \ge \|x\|_1 \right)
    \\
    \le P \left( \dualp{x, v} \ge \|x\|_1 \right) + P \left( \dualp{-x, v} \ge \|x\|_1 \right).
  \end{multline*}
  It suffices to estimate the first summand in the right hand side, the other follows analogously. We have
  \[
    P \left( \dualp{x, v} \ge \|x\|_1 \right)
    = P \left( \sum_{i=1}^d [x_i v_i - |x_i|  + \nu] \ge \nu d \right)
    =: P \left( \sum_{i=1}^d X_i \ge \nu d \right),
  \]
  with $X_i := x_i v_i - |x_i| + \nu$. By construction, $X_i$ has zero mean and from $X_i - \nu = |x_i| (\operatorname{sign}(x_i) v_i - 1)$ and $-1 \le x_i \le 1$, we obtain
  \[
    - |v_i| - 1 \le X_i - \nu  \le \max\{0, |v_i|-1\}
  \]
  so that $X_i$ is contained in an interval of length
  \[
    w_i = \max\{1+|v_i|, 2|v_i|\}.
  \]
  It follows that $w_i^2 \le 2 + 4v_i^2$ and therefore, Hoeffding's inequality implies
  \[
    P \left( \dualp{x, v} \ge \|x\|_1 \right)
    \le \exp \left( - \frac{\nu^2 d^2}{d + 2 \|v\|_2^2} \right).
  \]
  Using the same estimate for $P \left( \dualp{-x, v} \ge \|x\|_1 \right)$, concludes the proof.

\end{proof}

We are now ready to prove Theorem \ref{th:cs-block-relaxation}. 

\begin{proof}[Proof of Theorem \ref{th:cs-block-relaxation}]

The proof is a variant of \cite[Theorem 9.16]{FoucartRauhut2013}. We start by estimating the probability that the sparse recovery in \eqref{eq:cs-block-relaxation} fails.  According to the optimality criteria \eqref{eq:weighted-cs-condition} for weighted compressed sensing, with the weight $W_\ell$ defined before \eqref{eq:weight-on-support} and the complement $\bar{T}$ of $T$, the probability of failure is bounded by
\begin{multline*}
  P\left( \exists k \in \bar{T}: |\dualp{A X_{\cdot, k}, (A X_{\cdot, T})^{+*} W_{\ell T} \sign(z_T)}| \ge (W_{\ell})_{kk} \right)
  \\
  = P\left( \exists k \in \bar{T}: |\dualp{A^{l(k)} X^{l(k)}_{\cdot, k}, (A X_{\cdot, T})^{+*} W_{\ell T} \sign(z_T)}| \ge (W_{\ell})_{kk} \right),
\end{multline*}
where we have used the block structure of $X$ and $l(k)$ is the number of the block $l$ that contains the index $k \in \{1, \dots, r\theta\}$. With $v := (A X_{\cdot, T})^{+*} W_{\ell T} \sign(z_T)$ we can estimate this by
\begin{multline*}
  P\left( \exists k \in \bar{T}: |\dualp{X^{l(k)}_{\cdot, k}, (A^{l(k)})^* v}| \ge (W_{\ell})_{kk} \right)
  \\
  \begin{aligned}
    & \le P\left( \exists k \in \bar{T}: |\dualp{X_{\cdot, k}^{l(k)}, (A^{l(k)})^* v}| \ge (W_{\ell})_{kk} \; \middle| \; \|v\| \le \alpha \right) + P(\|v\| \ge \alpha).
  \end{aligned}
\end{multline*}
Note that the columns of $X$ involved in $v$ and $X^{l(k)}_{\cdot, k}$ are mutually exclusive, so that these two objects are independent. Therefore, using $(W_\ell)_{kk} = \|X_{\cdot,k}\|_p^p = \|X^{l(k)}_{\cdot,k}\|_p^p$ and $\E[|X^{l(k)}_{j,k}|] = \nu$ for $k \in \bar{T}$ from assumption \eqref{eq:random-X} by Lemma \ref{lemma:inner-product-size}, we have
\begin{equation*}
  P\left(|\dualp{X_{\cdot, k}^{l(k)}, (A^{l(k)})^* v}| \ge (W_{\ell})_{kk} \; \middle| \; \|v\| \le \alpha \right)
  \le
  2 \exp \left( - \frac{\nu^2 n^2}{n + 2 \|(A^{l(k)})^*v\|^2} \right).
\end{equation*}
Since by \eqref{eq:matrix-conditions} we $\|(A^{l(k)})^*v\| \le M(A) \|v\| \le M(A) \alpha$ and we have $R-|T|$ possible choices for $k$, applying a union bound yields
\begin{equation*}
  P\left( \exists k \in \bar{T}: |\dualp{X^{l(k)}_{\cdot, k}, (A^{l(k)})^* v}| \ge (W_{\ell})_{kk} \right)
  \le
  2 (R-|T|) \exp \left( - \frac{\nu^2 n^2}{n + 2 M(A)^2\alpha^2} \right).
\end{equation*}
Finally, estimating $P(\|v\| \ge \alpha)$ by Lemma \ref{lemma:layer-recovery-vector-bound}, we conclude that
\begin{align*}
  P(\text{recovery fail}) 
  & \le 2(R-|T|) \exp \left( - \frac{\nu^2 n^2}{n + 2 M(A)^2\alpha^2} \right) \\
  & \quad + 2 \left( \frac{12}{\delta} \right)^t \exp\left(- c \frac{F_S(A)^2}{M(A)^2} \min \left\{\frac{p_x^2 \delta^2}{4 K^4}, \frac{p_x \delta}{2 K^2} \right\} \right)
\end{align*}
where by the assumption \eqref{eq:delta-alpha} of Lemma \ref{lemma:layer-recovery-vector-bound} the constants are related by 
\begin{equation*}
  1-\delta = \frac{\sqrt{|T|} \overline{s}}{\alpha F_S(A) \sqrt{p_x}},
\end{equation*}
which completes the proof.

\end{proof}

\appendix

\section{Appendix}

\subsection{Probability Limits}
\label{appendix:limits}

\begin{lemma}
  \label{lemma:prob-sequence-limit}
  Assume that $p=p(\gamma) \ge 0$, $q = q(\gamma) > 0$ are two functions and that $\lim_{\gamma \to \infty} p = \lim_{\gamma \to \infty} q = 0$. Then
  \[
    \lim_{\gamma \to \infty} [1-p]^{1/q}
    = \left\{ \begin{array}{ll}
      1 & \text{if }\lim_{\gamma \to \infty} \frac{p}{q} = 0
      \\
      0 & \text{if }\lim_{\gamma \to \infty} \frac{p}{q} = \infty
    \end{array} \right. .
  \]
\end{lemma}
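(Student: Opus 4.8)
The plan is to take logarithms and reduce the statement to the behaviour of a product of two limits. Once $\gamma$ is large enough that $0 \le p < 1$ we may write
\[
  [1-p]^{1/q} = \exp\!\left( \frac{1}{q}\ln(1-p) \right),
\]
so by continuity of $\exp$ it suffices to compute $\lim_{\gamma \to \infty} \frac1q \ln(1-p)$ in each of the two regimes.

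The main (mild) subtlety is that $p = p(\gamma)$ may vanish for some $\gamma$, where the natural factorization below would read $0/0$. I would dispatch this first: whenever $p(\gamma)=0$ one simply has $[1-p]^{1/q} = 1$, which is harmless in the regime $p/q \to 0$ (the target limit there is $1$ anyway), and cannot occur for large $\gamma$ in the regime $p/q \to \infty$, since $q>0$ forces $p>0$ there. Hence I may restrict attention to the indices with $0 < p < 1$ and factor
\[
  \frac{1}{q}\ln(1-p) \;=\; \frac{\ln(1-p)}{-p}\cdot\left(-\frac{p}{q}\right).
\]
The elementary limit $\lim_{t\to 0^+}\ln(1-t)/t = -1$, combined with $p(\gamma)\to 0$, shows the first factor tends to $1$.

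It then remains to feed in the hypothesis on $p/q$. If $p/q \to 0$, the second factor tends to $0$, the product tends to $0$, and $[1-p]^{1/q}\to e^0 = 1$; if $p/q \to \infty$, the second factor tends to $-\infty$, the product tends to $-\infty$, and $[1-p]^{1/q}\to e^{-\infty}=0$. This establishes both cases. There is no genuine obstacle here beyond the bookkeeping around $p(\gamma)=0$ and the standard squeeze argument for $\ln(1-p)$ near $0$; everything else is continuity of $\exp$ and algebra of limits.
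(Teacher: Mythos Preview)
Your argument is correct. Both you and the paper begin by writing $[1-p]^{1/q}=\exp\bigl(\tfrac{1}{q}\ln(1-p)\bigr)$ and then analyse the exponent, but the key step differs. The paper applies l'Hospital's rule to $\ln(1-p)/q$, obtaining $\lim \tfrac{-p'/(1-p)}{q'}$, splits off the factor $1/(1-p)\to 1$, and then invokes l'Hospital once more to identify $\lim p'/q'$ with $\lim p/q$. You instead factor directly as $\tfrac{\ln(1-p)}{-p}\cdot\bigl(-\tfrac{p}{q}\bigr)$ and use the elementary limit $\ln(1-t)/t\to -1$. Your route is more self-contained: it avoids any implicit differentiability assumption on $p(\gamma)$ and $q(\gamma)$ that l'Hospital would require, and it sidesteps the question of whether $\lim p'/q'$ exists. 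You also handle the possibility $p(\gamma)=0$ explicitly, which the paper does not address. The paper's version is slightly shorter once one is willing to differentiate, but yours proves strictly more with less machinery.
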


\begin{proof}
 
By $[1-p]^{1/q} = \exp \left( \frac{1}{q} \ln(1-p) \right)$ it is sufficient to compute the limit of the exponent. l'Hospital's rule yields:
\[
  \lim_{\gamma \to \infty} \frac{\ln(1-p)}{q}  
  = \lim_{\gamma \to \infty} \frac{\frac{-1}{1-p} p'}{q'}  
  = \underbrace{\left(\lim_{\gamma \to \infty} \frac{1}{1-p} \right)}_{=1} \left( \lim_{\gamma \to \infty} - \frac{ p'}{q'} \right).
\]
Applying l'Hospital's rule again to the remaining term on the left hand side, we obtain
\[
  \lim_{\gamma \to \infty} \frac{\ln(1-p)}{q}  
  = \lim_{\gamma \to \infty} - \frac{ p'}{q'}
  = \lim_{\gamma \to \infty} - \frac{ p}{q}.
\]
which directly implies the statement of the lemma.

\end{proof}

\begin{lemma}
  \label{lemma:prob-compare}
  Assume that $p=p(\gamma) \ge 0$, $q = q(\gamma) > 0$ are two functions and that
  \begin{align*}
    \lim_{\gamma \to \infty} p & = 0, &
    \lim_{\gamma \to \infty} q & = 0, &
    \lim_{\gamma \to \infty} \frac{p}{q} & = 0.
  \end{align*}
  Then
  \[
    \lim_{\gamma \to \infty} \frac{1- [1-p]^{1/q}}{p/q} = 1
  \]
\end{lemma}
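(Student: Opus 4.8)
The plan is to reduce the statement to the already-established Lemma \ref{lemma:prob-sequence-limit} together with a routine limit computation. First I would introduce the abbreviation $t := p/q$, so that the hypotheses give $\lim_{\gamma \to \infty} t = 0$ and (from Lemma \ref{lemma:prob-sequence-limit}, first case, since $\lim p/q = 0$) $\lim_{\gamma \to \infty} [1-p]^{1/q} = 1$. The quantity whose limit we want is $\frac{1 - [1-p]^{1/q}}{t}$, which is a $0/0$ indeterminate form, so the natural tool is to express $[1-p]^{1/q}$ as $\exp\left( \frac{1}{q} \ln(1-p) \right) = \exp\left( \frac{\ln(1-p)}{p} \cdot t \right)$ and exploit the fact that $\frac{\ln(1-p)}{p} \to -1$ as $p \to 0$.

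Concretely, write $a(\gamma) := -\frac{\ln(1-p)}{p}$, so $a(\gamma) \to 1$, and $[1-p]^{1/q} = e^{-a t}$. Then
\[
  \frac{1 - [1-p]^{1/q}}{p/q} = \frac{1 - e^{-at}}{t}.
\]
Since $at \to 0$, I would use the elementary estimate $\frac{1 - e^{-s}}{s} \to 1$ as $s \to 0$ (itself an immediate consequence of l'Hospital or the Taylor expansion $e^{-s} = 1 - s + O(s^2)$): setting $s = at$ gives $\frac{1 - e^{-at}}{at} \to 1$, and multiplying by $a \to 1$ yields $\frac{1 - e^{-at}}{t} \to 1$, which is exactly the claim. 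An alternative, perhaps cleaner, route is to apply l'Hospital's rule directly to $\frac{1 - [1-p]^{1/q}}{p/q}$ viewed as a function of $\gamma$, mirroring the proof of Lemma \ref{lemma:prob-sequence-limit}; the derivative of the numerator is $-\frac{d}{d\gamma}[1-p]^{1/q}$ and one can factor out the nearly-constant pieces as in that earlier proof.

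I do not anticipate a serious obstacle here: the statement is a soft quantitative refinement of Lemma \ref{lemma:prob-sequence-limit}, and the only mild care needed is in handling the composition of limits (ensuring $a(\gamma) t(\gamma) \to 0$ so that the elementary bound $\frac{1-e^{-s}}{s} \to 1$ applies, and that $q > 0$ keeps all expressions well-defined). If anything is fiddly it is purely bookkeeping — making sure $p < 1$ eventually so that $\ln(1-p)$ is defined, which follows from $p \to 0$. No analogue of a genuine estimate or concentration argument is required.
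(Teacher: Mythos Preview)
Your proposal is correct. The primary route you describe---rewriting $[1-p]^{1/q} = e^{-at}$ with $a = -\ln(1-p)/p \to 1$ and $t = p/q \to 0$, then invoking the elementary limit $\frac{1-e^{-s}}{s} \to 1$---differs from the paper's argument, which applies l'Hospital's rule directly to $\frac{1-[1-p]^{1/q}}{p/q}$ as a function of $\gamma$ (this is precisely the alternative you sketch at the end of your plan). Your approach is arguably the cleaner of the two: it needs no differentiability of $p$ and $q$ in $\gamma$ (an assumption the paper's proof tacitly uses but the lemma statement does not make), and it sidesteps the somewhat unusual ``l'Hospital in reverse'' step the paper invokes in its final line. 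The paper's route, on the other hand, has the virtue of being mechanically parallel to the proof of Lemma~\ref{lemma:prob-sequence-limit}, so that the two lemmas read as a matched pair.
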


\begin{proof}
  
  By l'Hospital's rule we have
  \begin{align*}
    \lim_{\gamma \to \infty} \frac{1- [1-p]^{1/q}}{p/q}
    & = \lim_{\gamma \to \infty} - \frac{[1-p]^{1/q} \left( \frac{\ln(1-p)}{q} \right)'}{(p/q)'}
    \\
    & = \underbrace{\left( \lim_{\gamma \to \infty} [1-p]^{1/q} \right)}_{=1\text{ by Lemma \eqref{lemma:prob-sequence-limit}}} \left( \lim_{\gamma \to \infty} - \frac{\left( \frac{p}{q} \frac{\ln(1-p)}{p} \right)'}{(p/q)'} \right)
  \end{align*}
  Since $\lim_{\gamma \to \infty} \frac{\ln(1-p)}{p} = -1$ and the assumption $\lim_{\gamma \to \infty} \frac{p}{q} = 0$, we can apply the $\frac{0}{0}$ case of l'Hospital's rule in reverse to the remaining part on the right hand side and obtain

  \begin{equation*}
    \lim_{\gamma \to \infty} \frac{1- [1-p]^{1/q}}{p/q}
    = \lim_{\gamma \to \infty} - \frac{\left( \frac{p}{q} \frac{\ln(1-p)}{p} \right)'}{(p/q)'}
    = \lim_{\gamma \to \infty} - \frac{\frac{p}{q} \frac{\ln(1-p)}{p}}{p/q}
    = 1.
  \end{equation*}
  
\end{proof}

\subsection{Hanson-Wright Inequality}
\label{appendix:hanson-wright}

For the Hanson-Wright Inequality, see e.g. \cite{RudelsonVershynin2013,KasiviswanathanRudelson2019} and the references therein.

\begin{theorem}[{Hanson-Wright Inequality, \cite[Theorem 1.1]{RudelsonVershynin2013}}]
  \label{th:hanson-wright}
  Let $v \in \real^d$ be a vector with independent components with $\E[v_i] = 0$ and $\|v_i\|_{\psi_2} \le K$ and $M \in \real^{d \times d}$ be a matrix. Then, for every $t \ge 0$,
  \[
    \operatorname{Pr} \left[ \left| v^T M v - \E[v^T M v] \right| \ge t \right] \le 2 \exp \left( -c \min \left\{ \frac{t^2}{K^4 \|M\|_F^2}, \frac{t}{K^2 \|M\|} \right\} \right)
  \]
  for a positive absolute constant $c$.
\end{theorem}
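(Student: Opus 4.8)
The plan is to follow the classical argument of Rudelson and Vershynin \cite{RudelsonVershynin2013}, splitting the quadratic form into its diagonal and off-diagonal contributions and controlling each by a moment generating function (Chernoff) bound. By homogeneity in $K$ we may rescale the $v_i$ so that $K=1$, reinstating the correct powers of $K$ at the very end. Writing $M = D + M'$ with $D = \operatorname{diag}(M_{11}, \dots, M_{dd})$ and $M'$ of zero diagonal, we have $v^T M v - \E[v^T M v] = \sum_i M_{ii}(v_i^2 - \E[v_i^2]) + \sum_{i \ne j} M_{ij} v_i v_j$. It therefore suffices to bound each of the two summands on the event $\{\,\cdot\, \ge t/2\}$ and combine them via a union bound, absorbing the extra factor into the absolute constant $c$.

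For the diagonal term, each $v_i$ sub-Gaussian with $\|v_i\|_{\psi_2} \le 1$ makes $v_i^2 - \E[v_i^2]$ centered and sub-exponential with $\|v_i^2 - \E[v_i^2]\|_{\psi_1}$ bounded by an absolute constant. Bernstein's inequality for sums of independent sub-exponential variables then gives a tail of the form $2\exp(-c\min\{t^2/\|D\|_F^2, \, t/\max_i|M_{ii}|\})$; since $\|D\|_F \le \|M\|_F$ and $\max_i|M_{ii}| \le \|M\|$, this already has the claimed shape.

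The main work, and the main obstacle, is the off-diagonal chaos $S := \sum_{i \ne j} M_{ij} v_i v_j$, for which I would estimate $\E\exp(\lambda S)$ in three moves. First, a decoupling inequality replaces $S$ by $\tilde S := \sum_{i,j} M_{ij} v_i v'_j$ with $v'$ an independent copy of $v$, at the cost of an absolute constant inside $\lambda$. Second, conditioning on $v'$, the inner sum $\sum_i v_i (M v')_i$ is a linear form in the independent, mean-zero, sub-Gaussian variables $v_i$, so its conditional MGF is at most $\exp(C\lambda^2 \|M v'\|_2^2)$. Third, one must take the expectation over $v'$ of $\exp(C\lambda^2 \|M v'\|_2^2)$; since $\|M v'\|_2^2 = (v')^T M^T M v'$ is again a quadratic form, diagonalizing $M^T M$ reduces this to the MGF of a weighted sum of the sub-exponential variables $(v'_k)^2$, which is bounded by $\exp(C'\lambda^2 \|M\|_F^2)$ provided $|\lambda| \lesssim 1/\|M\|$. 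The delicate point is precisely this restriction on the range of $\lambda$: it is the source of the two-regime tail. Optimizing the resulting Chernoff bound $\operatorname{Pr}[S \ge t] \le \exp(-\lambda t + C'\lambda^2\|M\|_F^2)$ over the admissible interval $\lambda \in [0, c/\|M\|]$ yields the sub-Gaussian regime $t^2/\|M\|_F^2$ for small $t$ and the sub-exponential regime $t/\|M\|$ once the endpoint constraint becomes active.

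Finally I would undo the rescaling, which reinstates the factor $K^4$ in the Frobenius-norm term and $K^2$ in the operator-norm term, and assemble the diagonal and off-diagonal estimates. Because both exponents already carry the matching form $\min\{t^2/(K^4\|M\|_F^2), \, t/(K^2\|M\|)\}$, the union bound over the two parts preserves the stated bound up to the absolute constant $c$.
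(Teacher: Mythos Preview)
The paper does not prove this theorem at all: it is quoted verbatim from \cite[Theorem 1.1]{RudelsonVershynin2013} and used as a black box, with only the easy corollary for $M = C^T C$ spelled out. So there is nothing in the paper to compare your argument against; your sketch is essentially the original Rudelson--Vershynin proof, which is the right template.

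That said, there is one genuine gap in your outline. In the third move for the off-diagonal part you write that ``diagonalizing $M^T M$ reduces this to the MGF of a weighted sum of the sub-exponential variables $(v'_k)^2$''. This is not correct as stated: if $M^T M = U \Lambda U^T$, then $\|Mv'\|_2^2 = (U^T v')^T \Lambda (U^T v')$, and the coordinates of $U^T v'$ are \emph{not} independent for general sub-Gaussian $v'$, so you cannot factor the MGF. Rotation invariance is a Gaussian privilege. The standard fix (and what \cite{RudelsonVershynin2013} actually do) is to insert a Gaussian comparison before diagonalizing: either replace $v'$ by a Gaussian $g$ via the bound $\E_{v'}\exp(\mu\|Mv'\|_2^2)\le \E_g\exp(C\mu\|Mg\|_2^2)$, or use the identity $\exp(a^2/2)=\E_g\exp(ag)$ to write $\exp(\mu\|Mv'\|_2^2)=\E_g\exp(\sqrt{2\mu}\,\langle M^T g, v'\rangle)$, swap expectations, apply the sub-Gaussian MGF bound in $v'$, and only then diagonalize in the now-Gaussian vector $g$. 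Either way you recover the restriction $|\lambda|\lesssim 1/\|M\|$ and the bound $\exp(C'\lambda^2\|M\|_F^2)$ you need; but the diagonalization step as you wrote it does not go through.
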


For convenience, we restate Corollary \ref{cor:hanson-wright}.

\begin{corollary}
  Let all assumptions of Theorem \ref{th:hanson-wright} be true, and let $C^T C \in \real^{d \times d}$. Then, we have
  \begin{equation*}
    \operatorname{Pr} \left[ \left| v^T C^T C v - \E[v^T C^T C v] \right| \ge \epsilon \|C\|_F^2 \right] 
    \le 2 \exp \left( -c \frac{\|C\|_F^2}{\|C\|^2} \min \left\{ \frac{\epsilon^2}{K^4}, \frac{\epsilon}{K^2} \right\} \right).
  \end{equation*}
\end{corollary}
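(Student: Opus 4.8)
The plan is to derive the corollary directly from the Hanson--Wright inequality (Theorem \ref{th:hanson-wright}), applied to the symmetric matrix $M := C^T C$ with deviation level $t := \epsilon \|C\|_F^2$. All that is needed is to translate the two matrix norms $\|M\|_F$ and $\|M\|$ appearing in Theorem \ref{th:hanson-wright} into the norms $\|C\|_F$ and $\|C\|$ appearing in the corollary, and to observe that $\E[v^T M v] = \E[v^T C^T C v]$ and $v^T M v = v^T C^T C v$, so that the probability on the left-hand side is literally the same event.

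First I would record two elementary facts. For the operator norm, $\|C^T C\| = \|C\|^2$, the standard identity relating the spectral norm of $C$ to the largest eigenvalue of $C^T C$. For the Frobenius norm, the submultiplicative-type estimate $\|C^T C\|_F \le \|C^T\|\,\|C\|_F = \|C\|\,\|C\|_F$, which follows from $\|AB\|_F^2 = \sum_j \|A B e_j\|^2 \le \|A\|^2 \sum_j \|B e_j\|^2 = \|A\|^2 \|B\|_F^2$ with $A = C^T$, $B = C$. Hence $\|M\|_F \le \|C\|\,\|C\|_F$ and $\|M\| = \|C\|^2$.

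Substituting $t = \epsilon\|C\|_F^2$ into the exponent of Theorem \ref{th:hanson-wright} and using these two facts, the quadratic term is bounded below by
\[
  \frac{t^2}{K^4 \|M\|_F^2} = \frac{\epsilon^2 \|C\|_F^4}{K^4 \|C^T C\|_F^2} \ge \frac{\epsilon^2 \|C\|_F^4}{K^4 \|C\|^2 \|C\|_F^2} = \frac{\|C\|_F^2}{\|C\|^2}\cdot\frac{\epsilon^2}{K^4},
\]
and the linear term equals
\[
  \frac{t}{K^2 \|M\|} = \frac{\epsilon \|C\|_F^2}{K^2 \|C\|^2} = \frac{\|C\|_F^2}{\|C\|^2}\cdot\frac{\epsilon}{K^2}.
\]
Taking the minimum of the two lower bounds and using that $x \mapsto \exp(-cx)$ is decreasing converts the Hanson--Wright bound into exactly the claimed estimate.

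There is essentially no genuine obstacle: the statement is a repackaging of Hanson--Wright. The single point that warrants a line of justification rather than being immediate is the sharper Frobenius bound $\|C^T C\|_F \le \|C\|\,\|C\|_F$, as opposed to the cruder $\|C^T C\|_F \le \|C\|_F^2$; only the sharper version produces the stable-rank factor $\|C\|_F^2/\|C\|^2$, which is precisely the gain that Proposition \ref{prop:factorization-concentration} and the downstream RIP estimates depend on.
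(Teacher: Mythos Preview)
Your proof is correct and follows essentially the same approach as the paper: apply Theorem \ref{th:hanson-wright} with $M = C^T C$ and $t = \epsilon \|C\|_F^2$, then use $\|C^T C\|_F \le \|C\|\,\|C\|_F$ and $\|C^T C\| \le \|C\|^2$ to simplify the exponent. Your additional remark about why the sharper Frobenius estimate (rather than $\|C^T C\|_F \le \|C\|_F^2$) is needed to extract the stable-rank factor is a useful observation that the paper leaves implicit.
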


\begin{proof}

  Setting $M := C^T C$ and $t=\epsilon \|C\|_F^2$ in the Hanson-Wright inequality, we obtain
  \begin{multline*}
    \operatorname{Pr} \left[ \left| v^T C^T C v - \E[v^T C^T C v] \right| \ge \epsilon \|C\|_F^2 \right] 
    \\
    \le 2 \exp \left( -c \min \left\{ \frac{\epsilon^2 \|C\|_F^4}{K^4 \|C^T C\|_F^2}, \frac{\epsilon \|C\|_F^2}{K^2 \|C^T C\|} \right\} \right).
  \end{multline*}
  Thus, using that 
  \begin{align*}
    \|C^T C\|_F^2 & \le \|C^T\|^2 \|C\|_F^2 = \|C\|^2 \|C\|_F^2 \\
    \|C^T C\| & \le \|C\|^2,
  \end{align*}
  we obtain the claimed inequality.
  
\end{proof}

\subsection{Vectorization}
\label{appendix:vectorization}

\begin{lemma}

  \label{lemma:vectorization}
  
  Let $M \in \real^{a \times b}$ and $R \in \real^{b \times c}$ be matrices and $w \in \real^{c}$ a vector. Then
  \begin{enumerate}
  
    \item 
    Identifying the matrix $R \in \real^{b \times c}$ with the tensor $\hat{R} \in \real^b \otimes \real^c$, we have
    \begin{equation}
       M R w = (M \otimes w^T) \hat{R}.
       \label{eq:vectorization-appendix}
    \end{equation}
  
    \item
    If in addition $R$ is a random matrix with i.i.d entries and
    \begin{equation*}
      \begin{aligned}
        \E[r_{ij}]  & = 0, & 
        \E[r_{ij}^2] & = V
      \end{aligned}
    \end{equation*}
    for some $V \ge0$, we have
    \begin{equation*}
      \E \left[ \|M R w\|^2 \right] = V \|M\|_F^2 \|w\|^2.
    \end{equation*}
  \end{enumerate}

\end{lemma}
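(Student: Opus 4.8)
The plan is to prove both parts by a direct index computation, after fixing once and for all the ordering convention for the vectorization $\hat R$ and the matching block layout of the Kronecker product. I will use the row-major vectorization, so that $\hat R \in \real^{bc}$ has entries $\hat R_{(i-1)c+j} = R_{ij}$, and correspondingly $M \otimes w^T \in \real^{a \times bc}$ has entry $(M \otimes w^T)_{k,\,(i-1)c+j} = M_{ki}\, w_j$. (The column-major convention would instead replace $(M \otimes w^T)\hat R$ by $(w^T \otimes M)\hat R$; this is the same assertion up to relabeling, so the only thing that matters is internal consistency.)

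For part one, I would expand both sides entrywise. On the left, $(Rw)_i = \sum_{j=1}^c R_{ij} w_j$, hence $(MRw)_k = \sum_{i=1}^b M_{ki}(Rw)_i = \sum_{i,j} M_{ki} w_j R_{ij}$. On the right,
\[
  \big((M \otimes w^T)\hat R\big)_k
  = \sum_{i=1}^b \sum_{j=1}^c (M \otimes w^T)_{k,\,(i-1)c+j}\,\hat R_{(i-1)c+j}
  = \sum_{i,j} M_{ki} w_j R_{ij},
\]
so the two sides agree, which is \eqref{eq:vectorization-appendix}. One could instead invoke the classical identity $\mathrm{vec}(AXB) = (B^T \otimes A)\,\mathrm{vec}(X)$ with $A = M$, $X = R$ and $B = w$ read as a $c \times 1$ matrix (so that $\mathrm{vec}$ of the column vector $MRw$ is $MRw$ itself), but since the paper does not set up this machinery the bare index check is cleanest.

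For part two, I would square the expression just obtained and take expectations. From $(MRw)_k = \sum_{i,j} M_{ki} w_j R_{ij}$,
\[
  \E\big[(MRw)_k^2\big] = \sum_{i,j}\sum_{i',j'} M_{ki} w_j M_{ki'} w_{j'}\,\E[R_{ij} R_{i'j'}],
\]
and independence of the entries together with $\E[R_{ij}] = 0$, $\E[R_{ij}^2] = V$ gives $\E[R_{ij} R_{i'j'}] = V\,\delta_{ii'}\delta_{jj'}$, so the double sum collapses to $V\big(\sum_i M_{ki}^2\big)\big(\sum_j w_j^2\big)$. Summing over $k$ yields $\E[\|MRw\|^2] = V\,\|M\|_F^2\,\|w\|^2$. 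Equivalently, using part one, $\|MRw\|^2 = \hat R^T\big((M^TM)\otimes(ww^T)\big)\hat R$ and $\E[\hat R\hat R^T] = VI$, whence $\E[\|MRw\|^2] = V\,\mathrm{tr}\big((M^TM)\otimes(ww^T)\big) = V\,\mathrm{tr}(M^TM)\,\mathrm{tr}(ww^T) = V\,\|M\|_F^2\,\|w\|^2$.

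There is no serious obstacle here; both claims are bookkeeping. The only point that demands care is keeping the convention for $\hat R$ and the block layout of $M \otimes w^T$ mutually consistent, and — more importantly — consistent with the way $\hat X$ and $B$ are constructed in the main text, so that the blockwise factorization $B\hat X = AXu$ in \eqref{eq:matrix-vectorized}--\eqref{eq:factorization-vectorization} is literally an instance of \eqref{eq:vectorization-appendix} applied one block at a time.
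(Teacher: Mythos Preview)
Your proof is correct. The paper takes a slightly different, more coordinate-free route for part one: instead of fixing a vectorization convention and checking indices, it defines the identification $R \leftrightarrow \hat R$ by the linear extension of $rs^T \mapsto r \otimes s$ on rank-one matrices, and then verifies \eqref{eq:vectorization-appendix} on rank-one tensors via $(M \otimes w^T)(r \otimes s) = Mr \otimes w^Ts = M(rs^T)w$, concluding by linearity. For part two, your second argument (using $\E[\hat R\hat R^T] = V I$ and the trace of the Kronecker product) is exactly the paper's proof; your first, index-based argument is a more elementary alternative that avoids the tensor machinery entirely. Either approach is fine here---the index computation is arguably more transparent, while the rank-one/tensor argument has the advantage of being manifestly basis-independent and sidestepping the row- versus column-major bookkeeping you flagged.
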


\begin{proof}

We first identify the matrix $R \in \real^{b \times c}$ with the tensor product $ \hat{R} \in \real^b \otimes \real^c$ via a linear extension of $rs^T \to r \otimes s$. Then, we have
\[
  (M \otimes w^T) (r \otimes s) = Mr \otimes \underbrace{w^T s}_{\in \real} = (Mr) w^T s = M(rs^T)w,
\]
where in the second equality, we have identified $\real^n \otimes \real$ with $\real^n$. By linear extension, we thus have \eqref{eq:vectorization-appendix}.

In order to calculate $\E \left[ \|M R w\|^2 \right]$ note that $\E[\hat{R} \hat{R}^T]_{ij,kl}  = \E[r_{ij} r_{kl}] = V \delta_{ik} \delta_{jl}$ so that
\[
  \E [ \hat{R} \hat{R}^T] = V \, Id,
\]
with identity matrix $Id \in \real^b \otimes \real^b$. It follows that
\begin{align*}
  \E \left[ \|M R w\|^2 \right] 
  & = \E \left[ \|(M \otimes w^T) \hat{R}\|^2 \right] 
  = \E \left[ \hat{R}^T (M^T \otimes w)(M \otimes w^T) \hat{R} \right] 
  \\
  & = \E \left[ \operatorname{tr} \left( \hat{R}^T (M^T M \otimes w^T w) \hat{R} \right) \right] 
  = \E \left[ \operatorname{tr} \left( \hat{R} \hat{R}^T (M^T M \otimes w^T w) \right) \right] 
  \\
  & = \operatorname{tr} \left( \E \left[\hat{R} \hat{R}^T \right] (M^T M \otimes w^T w) \right) 
  = V \operatorname{tr} \left( M^T M \otimes w^T w \right)
  \\
  & = V \|M\|_F^2 \|w\|^2.
\end{align*}

\end{proof}

\subsection{Matrix Norms}

A block matrix $C = \begin{pmatrix} C_1 & \cdots & C_\theta \end{pmatrix}$ has spectral norm
\begin{equation}
  \|C\|^2  \le \sum_{l=1}^{\theta} \|C_l\|^2.
  \label{eq:block-matrix-norms}
\end{equation}
Indeed for any block vector $v = (v_1 \cdots v_\theta)$ we have
\[
  \|Cv\| 
  = \left\| \sum_{l=1}^\theta C_l v_l \right\| 
  \le \sum_{l=1}^\theta \|C_l v_l \|
  \le \sum_{l=1}^\theta \|C_l \| \| v_l \|
  \le \left( \sum_{l=1}^\theta \|C_l \|^2 \right)^{1/2} \left( \sum_{l=1}^\theta \| v_l \|^2 \right)^{1/2}.
\]

\subsection{Weighted Compressed sensing}

This section provides optimality criteria for weighted compressed sensing, analogous  to \cite[Theorems 4.26, 4.30, Corollary 4.28]{FoucartRauhut2013} for the unweighted case. 

\begin{lemma}
  \label{lemma:weighted-cs-optimality-criterium}
  Let $A \in \real^{d \times D}$, $x\in \real^D$, $y \in \real^D$ and $W \in \real^{D \times D}$ be a diagonal weight matrix with non-negative diagonal entries. Let $S$ be the support of $x$ and $\bar{S}$ its complement. If $A_{\cdot, S}$ is injective, $Ax = y$ and 
  \begin{align}
    |\dualp{A_{\cdot, j}, (A_{\cdot, S}^*)^+ W_{\cdot, S} \odot \sign(x_S)}| & < W_{jj}, & j & \in \bar{S},
    \label{eq:weighted-cs-condition}
  \end{align}
  then $x$ is the unique minimizer of the weighted compressed sensing problem
  \begin{align}
    & \min_{x} \|Wx\|_1 = \sum_{i=1}^D w_i |x_i| , & Ax & = y.
    \label{eq:weighted-cs}
  \end{align}

\end{lemma}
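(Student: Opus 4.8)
The plan is to run the classical dual-certificate (``exact recovery condition'') argument for $\ell_1$-minimization, with the sign pattern $\sign(x_S)$ replaced by the weighted pattern $W_{\cdot,S}\odot\sign(x_S)$. Concretely I would produce a vector $\eta\in\real^d$ that matches the weighted sign pattern on $S$ through $A^*$ and is strictly dominated by the weights off $S$, and then use it to show that perturbing $x$ along $\ker A$ cannot decrease the weighted $\ell_1$ objective.

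First I would construct the certificate. Because $A_{\cdot,S}$ is injective, its columns are linearly independent, so $A_{\cdot,S}^*\colon\real^d\to\real^{|S|}$ is surjective and satisfies $A_{\cdot,S}^*(A_{\cdot,S}^*)^+ = I_{|S|}$. Hence $\eta := (A_{\cdot,S}^*)^+\bigl(W_{\cdot,S}\odot\sign(x_S)\bigr)$, the minimal-norm solution of $A_{\cdot,S}^*\eta = W_{\cdot,S}\odot\sign(x_S)$, satisfies $\dualp{A_{\cdot,j},\eta} = W_{jj}\sign(x_j)$ for every $j\in S$, while the hypothesis \eqref{eq:weighted-cs-condition} states precisely that $|\dualp{A_{\cdot,j},\eta}| < W_{jj}$ for every $j\in\bar{S}$ (so in particular $W_{jj}>0$ there). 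This $\eta$ is the dual certificate.

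Next I would compare $\|Wz\|_1$ with $\|Wx\|_1$ for an arbitrary feasible $z$. Writing $h := z-x$ so that $Ah=0$, and combining the subgradient inequality $W_{jj}|x_j+h_j|\ge W_{jj}|x_j|+W_{jj}\sign(x_j)h_j$ on the support $S$ (valid since $W_{jj}\ge0$ and $x_j\ne0$ there) with $|x_j+h_j|=|h_j|$ off $S$, I obtain
\[
  \|Wz\|_1 \;\ge\; \|Wx\|_1 + \sum_{j\in S}W_{jj}\sign(x_j)h_j + \sum_{j\in\bar{S}}W_{jj}|h_j|.
\]
Substituting $W_{jj}\sign(x_j)=\dualp{A_{\cdot,j},\eta}$ on $S$ and using $\sum_j\dualp{A_{\cdot,j},\eta}h_j=\dualp{Ah,\eta}=0$, the first sum equals $-\sum_{j\in\bar{S}}\dualp{A_{\cdot,j},\eta}h_j$, hence
\[
  \|Wz\|_1 \;\ge\; \|Wx\|_1 + \sum_{j\in\bar{S}}\bigl(W_{jj}-|\dualp{A_{\cdot,j},\eta}|\bigr)|h_j| \;\ge\; \|Wx\|_1,
\]
proving $x$ is a minimizer. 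For uniqueness I would then chase the equality case: if $\|Wz\|_1=\|Wx\|_1$ then, since $W_{jj}-|\dualp{A_{\cdot,j},\eta}|>0$ strictly on $\bar{S}$, the last estimate forces $h_j=0$ for all $j\in\bar{S}$, so $0=Ah=A_{\cdot,S}h_S$ and injectivity of $A_{\cdot,S}$ gives $h_S=0$; thus $h=0$ and $z=x$.

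I do not expect a genuine obstacle here — it is the textbook argument with the weights carried along — so the ``hard part'' is only bookkeeping: making sure the identity $A_{\cdot,S}^*(A_{\cdot,S}^*)^+=I$ is invoked precisely via the full-column-rank hypothesis on $A_{\cdot,S}$, keeping the nonnegative weights in order during the subgradient step, and noting that it is the \emph{strict} inequality in \eqref{eq:weighted-cs-condition} together with injectivity of $A_{\cdot,S}$ that upgrades optimality to uniqueness. (When $W$ happens to be invertible one could alternatively reduce to the unweighted criterion via $v=Wx$, $\tilde A=AW^{-1}$, but the direct certificate argument is cleaner and also tolerates vanishing weights on $S$.)
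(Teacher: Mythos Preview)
Your proposal is correct and is essentially the same argument as the paper's: both construct the dual certificate $\eta=(A_{\cdot,S}^*)^+\bigl(W_{\cdot,S}\odot\sign(x_S)\bigr)$, use $A_{\cdot,S}^*\eta=W_{\cdot,S}\odot\sign(x_S)$ on the support together with the strict bound off it, expand $\dualp{\eta,A(z-x)}=0$, and finish uniqueness via injectivity of $A_{\cdot,S}$. The only cosmetic difference is that the paper first phrases the certificate as satisfying the KKT conditions of the Lagrangian before giving exactly your elementary chain of inequalities.
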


\begin{proof}

Define $h = (A_{\cdot, S}^*)^+ W_{\cdot, S} \odot \sign(x_S)$. Since $A_{\cdot,S}^*$ is surjective, we have $A_{\cdot, S}^* h = W_{\cdot, S} \sign(x_S)$ and \eqref{eq:weighted-cs-condition} yields $|\dualp{A_{\cdot, j}, h}| < W_{jj}$ for $j \in \bar{S}$, so that in summary we have
\begin{align*}
  h^* A_{\cdot, j} & = W_{jj} \sign(x_j), & j & \in S
  \\
  h^* A_{\cdot, j} & \in (-W_{jj}, W_{jj}), & j & \in \bar{S},
\end{align*}
which are the KKT conditions for Lagrangian $L = \|Wx\|_1 - h^*(Ax - y)$ of the optimization problem \eqref{eq:weighted-cs} with Lagrange multiplier $h$. Since the optimization problem is convex, the KKT conditions are sufficient, and $x$ is a minimizer, see e.g. \cite[Theorem 3.1.27]{Nesterov2018}. 

To show uniqueness, we consider an elementary proof of this statement. Let $g_{\bar{S}} = A_{\cdot, \bar{S}}^* h$. Then for any $z \in \real^D$ satisfying the constraint $Az=y=Ax$ and using the KKT conditions, we have
\begin{equation*}
  0 
  = \dualp{h, A(z-x)}
  = \dualp{W_{\cdot, S} \sign(x_S), z_S-x_S} + \dualp{g_{\bar{S}}, z_{\bar{S}} - x_{\bar{S}}}.
\end{equation*}
The first term can be estimated by
\begin{multline*}
  \dualp{W_{\cdot, S} \sign(x_S), z_S-x_S}
  = \sum_{j \in S} W_{jj} [\sign(x_j) z_j - \sign(x_j) x_j]
  \\
  \le \sum_{j \in S} W_{jj} [|z_j| - |x_j|]
  = \|W z_S\|_1 - \|W x_S\|_1
\end{multline*}
and using $x_{\bar{S}} = 0$ and the KKT condition, the second by
\[
  \dualp{g_{\bar{S}}, z_{\bar{S}} - x_{\bar{S}}}
  = \dualp{g_{\bar{S}}, z_{\bar{S}}}
  < \|W z_{\bar{S}}\|_1
  = \|W z_{\bar{S}}\|_1 - \|W x_{\bar{S}}\|_1,
\]
with a strict inequality for $z_{\bar{S}} \ne 0$. In conclusion, we have
\[
  0 = \dualp{h, A(z-x)} \le \|W z\|_1 - \|W x\|_1
\]
with with a strict inequality if $z_{\bar{S}} \ne 0$. This shows that $x$ is a minimizer. In case $z_{\bar{S}} = 0$, we have $y = Az = A_S z_S + A_{\bar{S}} z_{\bar{S}} = A_S z_S$ and because $A_S$ is injective $z_S = x_S$. This implies that $z=x$, which shows that $x$ is indeed the unique minimizer.

\end{proof}

\subsection{$NP$-hardness}
\label{sec:np-hard}

It is well known that the $\ell_p$-minimization problem \eqref{eq:cs-p-2} is $NP$-hard in general. For the results of the paper, we consider extra conditions on the sensing matrix $A$ and some constraints on the solution vector $x$. In this section, we show that these conditions do not generally render the problem tractable.

We consider the following three problems. The first two are known to be $NP$-hard and reduced to the compressed sensing problem with additional constraints used in this paper.

\begin{enumerate}

  \item \emph{Exact cover by $3$-set ($X3C_{m,\theta}$):} Given a collection $C^l$, $i=1, \dots, \theta$ of three element subsets of $\{1, \dots, m\}$ does there exits a sub-collection that is a cover of $\{1, \dots, m\}$? I.e. we want to find indices $J \subset \{1, \dots, \theta\}$ such that $\bigcup_{j \in J} C^l = \{1, \dots, m\}$ and $C^l \cap C^k = \emptyset$ for all $l,k \in J$ with $l \ne k$.

  \item \emph{Partition Problem ($PP_m$):} Given: integer or rational numbers $a_1, \dots, a_m$, can one partition $\{1, \dots, m\}$ into two sets $S_1$ and $S_2$ such that $\sum_{i \in S_1} a_i = \sum_{i \in S_2} a_i$?

  \item \emph{$\ell_p$-minimization ($LP_{m,N}^p$):} For $0 \le p \le 1$, given a sensing matrix $A \in \real^{m \times N}$ and measurements $y \in \real^m$, find the minimizer
  \begin{equation*}
    \begin{aligned}
      & \min_{x \in \real^N} \|x\|_p^p, &
      & \text{s.t.}
      & Ax & = y.
    \end{aligned}
  \end{equation*}

\end{enumerate}
For the following discussion, we assume the usual block structure 
\begin{align*}
  A & = \begin{bmatrix} A^1 & \cdots A^\theta \end{bmatrix}, &
  A^l & \in \real^{m \times n}.
\end{align*}
with $N = n\theta$.

We first consider the assumptions in the main result Theorem \ref{th:cs-block-relaxation} on the sensing matrix $A$ or their simplified variants in Section \ref{sec:cs-example}. Since the theorem states a sparse recovery result instead of directly addressing the $\ell_p$-minimization \eqref{eq:cs-p-2}, we consider reductions from the covering problem to $\ell_0$-minimization. For general matrix $A$, the covering problem $X3C_{m,N}$ is polynomial-time reducible to $LP_{m,N}^0$. With the given restrictions on $A$ a reduction is still possible, at least for the smaller problem $X3C_{m,\theta}$. Note however that Theorem \ref{th:cs-block-relaxation} cannot deal with any instance in the following lemma because the solution vector $x$ is contained in the probabilistic part of the statement.

\begin{lemma}
  For $n < m-2$, there is a polynomial-time reduction from $X3C_{m, \theta}$ to $LP_{m+n-1, n\theta}^0$ with blocks of size $A^l \in \real^{m+n-1 \times n}$ that satisfy
  \begin{align*}
    |S^l| & \le \|A^l\|_{\cdot, S^l}^2 \le 3 |S^l|, &
    1 & \le \|A^l\| \le \sqrt{3}
  \end{align*}
  for all index sets $S^l \subset \{1, \dots, n\}$.
\end{lemma}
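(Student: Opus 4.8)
The plan is to give an explicit encoding of an $X3C_{m,\theta}$ instance as an $\ell_0$-minimization problem whose sensing matrix has the required block structure and norm bounds, and such that a minimizer of $\ell_0$-norm exactly $m/3$ exists if and only if the covering instance is solvable. First I would encode each three-element set $C^l \subset \{1,\dots,m\}$ as a column vector $a^l \in \real^m$ with entries equal to $1$ on the three indices of $C^l$ and $0$ elsewhere; these will serve as the ``data'' columns. The right-hand side is taken to be $y_{1:m} = (1,\dots,1)^T \in \real^m$ in the first $m$ coordinates. A disjoint sub-collection covering $\{1,\dots,m\}$ corresponds exactly to choosing a $0/1$ combination of the $a^l$ summing to the all-ones vector, and disjointness is forced by the requirement that the sum equals $(1,\dots,1)$ rather than having some coordinate $\ge 2$.

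The second step is to enlarge the matrix so that each block $A^l$ has $n$ columns (not just one) and $m+n-1$ rows, and so that the norm bounds $|S^l| \le \|A^l_{\cdot,S^l}\|_F^2 \le 3|S^l|$ and $1 \le \|A^l\| \le \sqrt 3$ hold. For the row count, I would append $n-1$ extra coordinates; the idea is to pad each block with $n-1$ ``dummy'' columns that are scaled standard basis vectors supported on these extra rows (e.g. $e_{m+1},\dots,e_{m+n-1}$ suitably placed), arranged so that within a block the columns have pairwise disjoint or nearly-orthogonal supports. If every column of $A^l$ has squared Euclidean norm between $1$ and $3$ (the encoding column $a^l$ has norm exactly $\sqrt 3$, the dummy columns norm exactly $1$), then for any index set $S^l$ the Frobenius bound $|S^l| \le \|A^l_{\cdot,S^l}\|_F^2 \le 3|S^l|$ is immediate by summing column norms. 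The spectral bounds $1\le \|A^l\|\le\sqrt3$ follow because $\|A^l\|\ge \max_k \|A^l_{\cdot,k}\| \ge 1$ and, if the columns are orthogonal, $\|A^l\| = \max_k\|A^l_{\cdot,k}\| \le \sqrt3$. The point of the condition $n < m-2$ is presumably to guarantee enough room: the $n-1$ dummy rows and $n-1$ dummy columns must not interfere with the $m$-dimensional covering coordinates, and one needs the dummy columns never to be part of a minimizer, which is arranged by giving the extra coordinates of $y$ the value $0$ so that any use of a dummy column only increases $\|x\|_0$ without helping satisfy the constraint.

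The third step is the equivalence argument. If the covering instance has a solution $J$ with $|J| = m/3$, then setting $x$ to be $1$ on the encoding column of each $C^l$, $l\in J$, and $0$ elsewhere gives $Ax = y$ with $\|x\|_0 = m/3$. Conversely, suppose $x$ solves $LP_{m+n-1,n\theta}^0$. Any dummy column contributes nothing to the first $m$ coordinates of $Ax$, and since $y$ vanishes on the extra $n-1$ coordinates, a minimizer uses no dummy column (removing it keeps feasibility and lowers $\|x\|_0$); alternatively, one shows the minimum is attained with no dummy columns. Among the encoding columns, the constraint $\sum_{l} x_l a^l = (1,\dots,1)^T$ forces, at each of the $m$ coordinates, the sum of the $x_l$ over sets $C^l$ containing that coordinate to equal $1$; with nonnegativity (or by a standard argument that a minimal-support solution can be taken with $0/1$ entries here) this forces the chosen sets to be disjoint and to cover everything, i.e. $\|x\|_0 \ge m/3$, with equality giving an exact cover. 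Hence $X3C_{m,\theta}$ has a solution iff the $\ell_0$-minimum is $m/3$, and the reduction is clearly polynomial-time.

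\textbf{Main obstacle.} The routine parts are the encoding of covers and the column-norm bookkeeping for the Frobenius and spectral estimates. The delicate point I expect to be the real work is arranging the $n-1$ dummy columns and the $n-1$ extra rows so that (i) within each block the columns are genuinely orthogonal (or at least have spectral norm $\le\sqrt3$), (ii) dummy columns provably never appear in an $\ell_0$-minimizer, and (iii) the encoding columns $a^l$, which live only in the first $m$ coordinates, still fit inside a block of $n < m-2$ columns without the three-element support of $a^l$ being expressible as a short combination of other columns in the same or other blocks. Getting this packing to work for all admissible $n$ simultaneously, while keeping the reduction size polynomial and the norm constants exactly as stated, is where the care is needed; the inequality $n < m-2$ is the hypothesis that makes such a packing possible.
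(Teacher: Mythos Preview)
Your proposal is correct and is exactly the paper's construction: the paper sets $A^l=\begin{bmatrix} a^l & 0\\ 0 & U^l\end{bmatrix}$ with $U^l\in\real^{(n-1)\times(n-1)}$ orthogonal (your choice of standard basis vectors is the case $U^l=I$) and $y=[\,1,\dots,1\mid 0,\dots,0\,]$, so the dummy and covering parts decouple completely. The ``main obstacle'' you anticipate does not materialize: since $\hat y=0$, setting all dummy coordinates to zero is always feasible, hence any $\ell_0$-minimizer has $u^l=0$; and the bound $\|x\|_0\ge m/3$ comes from the pure support count that each $a^l$ has exactly three nonzeros, so no nonnegativity or $0/1$ argument is needed and the hypothesis $n<m-2$ plays no role in the reduction itself.
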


\begin{proof}

Given an instance of $X3C_{m,\theta}$, let us define the vectors $a^l \in \real^m$ such that $a^l_j = 1$ if $j \in C^l$ and $a^l_j = 0$ else, let $U^l \in \real^{n-1 \times n-1}$ be orthogonal matrices and define the sensing matrix blocks
\[
  A^l = \begin{bmatrix} a^l & \\ & U^l \end{bmatrix} \in \real^{m+n-1 \times n}
\]
and measurement vector
\begin{align*}
  y & = \begin{bmatrix} \bar{y}, \hat{y} \end{bmatrix}, &
  \bar{y} & = \begin{bmatrix} 1 & \cdots & 1\end{bmatrix}^T \in \real^m, &
  \hat{y} & = \begin{bmatrix} 0 & \cdots & 0\end{bmatrix}^T \in \real^{n-1}
\end{align*}
Since all blocks $a^l$ and $U^l$ decouple, the matrix $A$ satisfies all given requirements.

We next show that $X3C_{n, \theta}$ has a solution if and only if the sparsest solution of $Ax = y$ satisfies $\|x\|_0 = m/3$. We first split $x=[x^1, \dots, x^l]$ with $x^l = [v^l, u^l]$ according to the block structure of $A$. This leads to the two decoupled systems
\begin{align*}
  \sum_{l=1}^\theta a^l v^l & = \bar{y}, &
  \sum_{l=1}^\theta U^l u^l & = \hat{y}.
\end{align*}
It directly follows that $u^l = 0$, $l=1, \dots, \theta$. The remaining problem is identical to the original proof in \cite{Natarajan1995} or in the book \cite{FoucartRauhut2013}. Since each column $a^l$ has exactly three non-zero components, we must have $\|x\|_0 \ge m/3$ to obtain a right hand side $\bar{y}$ with all entries one, with equality if and only if there is a cover $J$ and $v^l = 1$ if $l \in J$ and zero else. 

\end{proof}

In this paper, we also consider the case where the solution $x$ comes from a discrete set only. Whereas replacing a continuous variable by a discrete one often makes a problem harder, if we restrict the variables too severely, it might become trivial. With discrete $x$, a reduction from $PP_m$ to $LP_{m+1, 2m}^p$ is particularly simple. Unlike Theorem \ref{th:cs-block-relaxation} this is a $\ell_p$-minimization for $p>0$ so that a direct connection between the theorem and the following lemma can only be made if $A$ allows sparse recovery by $\ell_p$-minimization. Nonetheless, the result indicates that the discrete sets used in Section \ref{sec:cs-example} are not overly simple.

\begin{lemma}
  For $0<p<1$, there is a polynomial-time reduction from $PP_m$ to $LP_{m+1, 2m}^p$ with blocks $A^l \in \real^{m,n}$ with
  \begin{align*}
    |S^l| & \le \|A^l\|_{\cdot, S^l}^2 \le 2 |S^l|, &
    \sqrt{1-\frac{1}{2}} & \le \|A^l\| \le \sqrt{1+\frac{1}{2}}
  \end{align*}
  for any $n$ and even $\theta$ with $n\theta = 2m$, for all index sets $S^l \subset \{1, \dots, n\}$ and solution vector $x$ restricted to $\{-1, -1/2, 0, 1/2, 1\}$.
\end{lemma}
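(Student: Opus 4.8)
I would adapt the classical reduction from \textsc{Partition} to $\ell_p$-minimization (as in \cite{Natarajan1995,GeJiangYe2011,FoucartRauhut2013}), but rescale the ``partition row'' and choose the column grouping so that the per-block norm bounds survive. Given an instance $a_1,\dots,a_m$ of $PP_m$, which we may assume is not identically zero, set $C := 2\sum_{j=1}^m |a_j|$ and build the $(m+1)\times 2m$ matrix $A$ whose first row is $\big[\,a_1/C,\ -a_1/C,\ a_2/C,\ -a_2/C,\ \dots,\ a_m/C,\ -a_m/C\,\big]$ and whose $(i+1)$-st row, for $i=1,\dots,m$, has a $1$ in columns $2i-1$ and $2i$ and zeros elsewhere; take $y := (0,1,\dots,1)^T \in \real^{m+1}$. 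All entries are rationals of polynomial bit size, so the instance is produced in polynomial time. The reduction then solves $LP_{m+1,2m}^p$ (over $x$ restricted to $\{-1,-\tfrac12,0,\tfrac12,1\}^{2m}$) and answers ``yes'' to $PP_m$ exactly when the optimal value equals $m$, equivalently when the returned minimizer has no entry equal to $\pm\tfrac12$.

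\textbf{Correctness of the reduction.} The equations from rows $2,\dots,m+1$ are $x_{2i-1}+x_{2i}=1$, and with entries in $\{-1,-\tfrac12,0,\tfrac12,1\}$ the only admissible pairs are $(1,0)$, $(0,1)$ and $(\tfrac12,\tfrac12)$, with $\ell_p$-contributions $1$, $1$ and $2^{1-p}$ respectively; here $2^{1-p}>1$ precisely because $p<1$. Hence $\|x\|_p^p \ge m$ for every feasible $x$, with equality iff every pair is $(1,0)$ or $(0,1)$. In that case $\epsilon_i := x_{2i-1}-x_{2i}\in\{+1,-1\}$, and the first-row equation, after clearing the factor $1/C$, reads $\sum_{i=1}^m a_i\epsilon_i = 0$, so $\{i:\epsilon_i=1\}$ and $\{i:\epsilon_i=-1\}$ form a valid partition; conversely a partition yields such an $x$ with value exactly $m$. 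Since the all-$(\tfrac12,\tfrac12)$ vector is always feasible, the feasible set is nonempty, and if $PP_m$ is a no-instance every feasible $x$ has at least one $(\tfrac12,\tfrac12)$ pair and hence value $\ge m-1+2^{1-p}>m$. (The discreteness restriction is inessential: for $t\in\real$ one has $|t|^p+|1-t|^p=1$ only at $t\in\{0,1\}$, so any unconstrained minimizer of value $m$ is automatically $0/1$.)

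\textbf{Block norm bounds.} Fix $n$ and an even $\theta$ with $n\theta=2m$, and form the blocks by distributing the $m$ odd-indexed columns over the first $\theta/2$ blocks and the $m$ even-indexed columns over the last $\theta/2$ blocks, $n$ columns per block; this is possible because $n(\theta/2)=m$, and no block then contains both columns $2i-1$ and $2i$ of a common index. Each column of $A$ has squared norm $1+(a_i/C)^2$, and $(a_i/C)^2 \le \|a\|_2^2/C^2 \le \|a\|_1^2/(4\|a\|_1^2)=\tfrac14$, so $\|A^l_{\cdot,S^l}\|_F^2 = \sum_{j\in S^l}\big(1+(a_{i(j)}/C)^2\big) \in \big[\,|S^l|,\ |S^l|+\tfrac14\,\big]\subset\big[\,|S^l|,\ 2|S^l|\,\big]$ for every block $l$ and every $S^l$ (the case $|S^l|=0$ being trivial). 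Up to a permutation of rows, a block $A^l$ equals $\big[\begin{smallmatrix} b^T \\ I_n\end{smallmatrix}\big]$ with $\|b\|_2^2=\sum_{j\in\text{block }l}(a_{i(j)}/C)^2\le\tfrac14$, so $(A^l)^TA^l=I_n+bb^T$ has eigenvalues $1$ (multiplicity $n-1$) and $1+\|b\|_2^2\in[1,\tfrac54]$, giving $\sqrt{1-\tfrac12}\le 1\le\|A^l\|\le\sqrt{\tfrac54}\le\sqrt{1+\tfrac12}$.

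\textbf{Main obstacle.} The delicate point, and the reason for the two nonobvious choices above, is reconciling the spectral bound $\|A^l\|\le\sqrt{3/2}$ with the rigid structure forced by the reduction. If columns $2i-1$ and $2i$ landed in the same block, the $2\times2$ submatrix $\big[\begin{smallmatrix} a_i/C & -a_i/C \\ 1 & 1\end{smallmatrix}\big]$ would already contribute a singular value equal to $\sqrt{2}>\sqrt{3/2}$, so the grouping must separate every such pair; and the scaling $C$ must be large enough that $\sum_j (a_{i(j)}/C)^2\le\tfrac12$ on every block, yet still rational and polynomially bounded, which is why $C=2\|a\|_1\ge 2\|a\|_2$ is used. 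Everything else — polynomial size, nonemptiness of the feasible set, and the per-pair $\ell_p$ bookkeeping — is routine.
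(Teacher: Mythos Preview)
Your proposal is correct and uses essentially the same reduction as the paper, following \cite{GeJiangYe2011}. The only differences are presentational: the paper orders the columns as $A=\bigl[\begin{smallmatrix} I & I\\ a^T & -a^T\end{smallmatrix}\bigr]$ so that consecutive-column blocks automatically separate every $(u_i,v_i)$ pair, and it simply writes ``upon possibly rescaling the last row of $A$'' where you supply the explicit constant $C=2\|a\|_1$ and carry out the Frobenius and spectral norm computations in detail.
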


\begin{proof}

The proof is identical to \cite[equation (9)]{GeJiangYe2011}, we only trace the matrix properties. Given an instance of $PP_m$, define the matrix
\begin{align*}
  A & = \begin{bmatrix}
    I & I \\ a^T & - a^T
  \end{bmatrix}, &
  y & = \begin{bmatrix}
    1 & \cdots & 1 & 0
  \end{bmatrix},
\end{align*}
where in the following $I$ denotes the identity matrix of suitable dimensions. Since $\theta$ is even, it follows that each block $A^l$ has the form
\[
  A^l = \begin{bmatrix}
    0 \\
    I \\
    0 \\
    \pm b^T
  \end{bmatrix}
\]
for some vector $b$ that consists of suitable components of $a$. Upon possibly rescaling the last row of $A$, the blocks $A^l$ satisfy all requirements of the lemma.

Let $x$ be a $\ell_p$ minimizer with $Ax=y$. We show that the partition problem has a solution if and only if $\|x\|_p^p = m = n \theta /2$. Let us split the solution as $x = [u,v]$ with $u,v \in \real^{m}$ according to the block structure of $A$. For each component we have $u_i + v_i = 1$ and therefore $|u_i|^p + |v_i|^p \ge 1$ with equality if and only if $u_i=0$ or $v_i=0$. Hence we have $\|x\|_p^p \ge m$ with equality if and only if $u_i=0$ and $v_i=1$ or $u_i=1$ and $v_i=0$ for all $i$, which directly implies the equivalence to the partition problem.

The restriction of $x$ to the given discrete set does not change the argument. Note that the equation $Ax = y$ always has at least the solution $x_i = 1/2$ for all $i$.

\end{proof}

\bibliographystyle{abbrv}
\bibliography{../notes/notes}

\end{document}